\newtheorem{thm}{Theorem}[section]
\newtheorem*{thm*}{Theorem}
\newtheorem{cor}[thm]{Corollary}
\newtheorem{lem}[thm]{Lemma}
\theoremstyle{definition}
\newtheorem{defn}[thm]{Definition}
\theoremstyle{remark}
\newtheorem{rem}[thm]{Remark}
\theoremstyle{example}
\newtheorem{exa}[thm]{Example}
\theoremstyle{conjecture}
\numberwithin{equation}{section}
\newcommand{\calB}{\mathcal B}
\newcommand{\calE}{\mathcal E}
\newcommand{\calF}{\mathcal F}
\newcommand{\calH}{\mathcal H}
\newcommand{\calL}{\mathcal L}
\newcommand{\calM}{\mathcal M}
\newcommand{\calN}{\mathcal N}
\newcommand{\calP}{\mathcal P}
\newcommand{\calQ}{\mathcal Q}
\newcommand{\calS}{\mathcal S}
\newcommand{\calX}{\mathcal X}
\newcommand {\C} {\mathbb C}
\newcommand {\R} {\mathbb R}
\newcommand {\X} {\mathbb X}
\newcommand {\Z} {\mathbb Z}
\begin{document}

\title[Polynomial behavior in mean of stochastic skew-evolution semiflows]{Polynomial behavior in mean of stochastic skew-evolution semiflows}%

\date{\today}%

\author{Pham Viet Hai}%
\address[P. V. Hai]{ISE Department, National University of Singapore, 1 Engineering Drive 2, Singapore 117576, Singapore.}%
\email{phamviethai86@gmail.com}

\subjclass[2010]{93E15, 37L55}%

\keywords{polynomial stability, polynomial instability, skew-evolution semiflows, Datko's theorem, Banach function spaces}%


\maketitle

\begin{abstract}
In this paper, we are interested in the more general concept of a polynomial (in)stability in mean in which the polynomial behaviour in the classical sense is replaced by a weaker requirement with respect to some probability measure. This concept includes the classical concepts of a polynomial (in)stability as particular cases. Extending techniques employed in the deterministic case, we obtain variants of a well-known theorem of Datko for a polynomial (in)stability in mean. This is done by using the techniques of stochastic skew-evolution semiflows and Banach spaces of functions or sequences.
\end{abstract}

\section{Introduction}
A series of recent works have pointed out that an impressive list of classical problems and questions can be investigated employing the theory of skew-evolution (semi)flows. These (semi)flows arise naturally when one considers the linearization along an invariant manifold of a dynamical system generated by an autonomous differential equation. What makes skew-evolution (semi)flows important is the fact that they can be viewed as really generalizations of many well-known concepts in dynamical systems, such as $C_0$-(semi)groups, evolution families, and skew-product (semi)flows. 

The last decades have witnessed many momentous contributions in the study of the asymptotic behaviour of differential equations in abstract spaces. Many results can be carried out not only for differential equations and evolution families but also for skew-evolution semiflows. Among them, we can mention a famous result from the paper \cite{RD} of Datko: an exponentially bounded evolution family $\{U(t,s)\}_{t\geq s\geq 0}$ is exponentially stable on a Banach space $\X$ if and only if there exists $p>0$ such that
$$
\sup_{s\geq 0}\int_s^\infty\|U(t,s)x\|^p\,dt<\infty,\quad\forall x\in\X.
$$
The integral condition above means that for every $x\in\X$, the orbit $\tau\mapsto\|U(\tau+s,s)x\|$ belongs to the Lebesgue space $L^p(\R_{\geq 0})$ in a uniform way. Earlier, in \cite{DATKO1970610} Datko had obtained the exponent $p=2$ for strongly continuous semigroups. Since then Datko's theorem was the inspiration for a numerous number of works devoted to the existence of the exponential stability on the half-line. An interesting intervention on this subject is that of Zabczyk in \cite{zabczyk1974remarks}, where an analogous result for the discrete time was first obtained. Gradually, the techniques were improved and widened: from Datko-type characterization to Barbashin-type characterization (see \cite{PVH2, hai2011two, hai2012two}), from stability to instability (see \cite{MSS3}). Due its applications, Datko's theorem became one of the pillars of the modern control theory (see \cite{curtain2012introduction}). 

A notable improvement on this subject was given by  Neerven \cite{JMAMVN}, in which he discovered that the $p$-integrability of the associated orbits of a strongly continuous semigroup, which became so familiar in Datko's theorem, can be generalized to a more enhanced level, by replacing the Lebesgue space $L^p(\R_{\geq 0})$ with appropriate Banach function spaces. The paper \cite{JMAMVN} inspired the authors \cite{MSS1, MSS3} to characterize which skew-product semiflows are (un)stable in terms of the existence of various  function spaces. Some of results were generalized to the case of skew-evolution semiflows in \cite{PVH1, hai2011two, hai2012two}.

Over the last years, it can be seen an increasing interest in the research of a polynomial stability. Important contributions in the study of the existence of a polynomial stability for differential equations have been made and it is worth to mention here works \cite{barreira2009polynomial, bento2009stable}. These works have since stimulated intensive research on a polynomial stability. In \cite{PVH3}, the author presented a Datko-type characterization for a polynomially bounded evolution family to be polynomially stable. We recall a particular form from the paper \cite{PVH3}: a polynomially bounded evolution family $\{U(t,s)\}_{t\geq s\geq 0}$ is polynomially stable if and only if there exists $\delta>0$ such that
$$
\sup_{s\geq\delta}\int_s^\infty\dfrac{\|U(t,s)x\|dt}{t}<\infty,\quad\forall x\in\X.
$$
Doing the change of variables $t=\tau s$, the integral condition above is precisely the fact that for every $s\geq\delta$ and every $x\in\X$, the orbit $\tau\mapsto \|U(\tau s,s)x\|$ belongs to the weighted Lebesgue space $L_w^1(\R_{\geq 1})$ in a uniform way.

Naturally, the question arises whether Datko's theorem can be generalized to the case of a polynomial (in)stability in mean. The aim of this paper is to answer this question, and we even make a step beyond. We use the theory of Banach function spaces to characterize polynomially bounded stochastic skew-evolution semiflows, which are polynomially (un)stable in mean. It should be noted that the class of Banach function spaces used here is large enough to contain the weighted Lebesgue spaces $L_w^p(\R_{\geq 1})$, $p>0$ as very particular cases. Our characterizations are variants for the stochastic case of the famous theorem, of the deterministic case, due to Datko.

\section*{Notations}
Throughout the paper, we denote by $\Z$, $\R$, $\C$ by the sets of integers, real numbers, complex numbers, respectively. For $t\in\R$, the symbol $[t]$ stands for the greatest integer less than or equal to $t$. For a set $A\subseteq\R$, the symbol $\calX_A$ indicates the characteristic function of $A$, and $A_{\geq\delta}$ stands for the set $\{x\in A: x\geq\delta\}$. Denote $T=\{(t,s):t\geq s\geq 0\}$. We always denote by $\X$ a real or complex Banach space, and by $\calL(\X)$ the Banach algebra of all bounded linear operators on $\X$. The symbol $I$ stands for the identity operator on $\X$. The norm on $\X$ and on $\calL(\X)$ is denoted as $\|\cdot\|$. For $p>0$, we denote by $l^p_w(\Z_{\geq 1})$ the space of all sequences $s:\Z_{\geq 1}\to\R$ with $\sum_{j=1}^\infty \frac{|s(j)|^p}{j}<\infty$, and by $L_w^p(\R_{\geq 1})$ the space of all Lebesgue measurable functions $f:\R_{\geq 1}\to\R$ with $\int\limits_1^\infty\frac{|f(t)|^p}{t} dt<\infty$. For given constants $a,b>0$, we denote by $\calS(a,b)$ the set of non-decreasing sequence $\{t_n\}\subset\R_{\geq 1}$, with the following property
\begin{equation}\label{tab}
\dfrac{t_{mn}}{t_n}\leq am^b,\quad\forall m,n\in\Z_{\geq 1}.
\end{equation}

\section{Preliminaries}
\subsection{Skew-evolution semiflows}
Skew-evolution semiflows of the deterministic case were discussed in \cite{PVH1, SM} with motivations from differential equations and the study of Datko's theorem. In this section, we present a brief introduction to stochastic skew-evolution semiflows. We note the reader that stochastic cocycles studied in \cite{stoica2010uniform} are particular cases of the concepts below. We always denote by $(\calM,\calB,\mathbf{P})$ a probability space.
\begin{defn}
A measurable random field $\zeta:T\times\calM\to\calM$ is called a \emph{stochastic evolution semiflow} if
\begin{enumerate}
\item $\zeta(t,t,v)=v$, $\forall t\geq 0$, $\forall v\in\calM$.
\item $\zeta(t,s,v)=\zeta(t,r,\zeta(r,s,v))$, $\forall t\geq r\geq s\geq 0$, $\forall v\in\calM$.
\end{enumerate}
\end{defn}

\begin{exa}\label{example-varphi}
If $\varphi:\R_{\geq 0}\times\calM\to\calM$ is a stochastic semiflow (see \cite[Definition 2.1]{stoica2010uniform}), then the map $\zeta:T\times\calM\to\calM$ defined by $\zeta(t,s,v)=\varphi(t-s,v)$ is a stochastic evolution semiflow.
\end{exa}

\begin{exa}\label{example}
Let $\X$ be a real separable Hilbert space. $\calM$ is the space of all continuous paths $v:\R_{\geq 0}\to\X$ with $v(0)=0$, endowed with the compact open topology. Let $\calB_t$, where $t\geq 0$, be the $\sigma$-algebra generated by the set $\{v\to v(s)\in\X:s\leq t\}$ and let $\calB$ be the associated Borrel $\sigma$-algebra to $\calM$. Thus, for a Wiener measure $\mu$ on $\calM$, $(\calM,\calB,\calB_t,\mu)$ is a filtered probability space. Then $\zeta,\xi:T\times\calM\to\calM$ defined by
$$\zeta(t,s,v)(\tau)=\dfrac{t+1}{s+1}v(\tau),\quad\xi(t,s,v)(\tau)=v(\tau+t-s)-v(t-s)$$ are stochastic evolution semiflows.
\end{exa}

Several important works on the existence of stochastic semiflows for stochastic evolution equations emerged and the reader can refer to the monographs \cite{arnold1974stochastic, da2014stochastic}.

\begin{defn}
A map $\Phi:T\times\calM\to \calL(\X)$ is called a \emph{stochastic evolution cocycle} associated to an evolution semiflow $\zeta$ if
\begin{enumerate}
\item $\Phi(t,t,v)=I$, $\forall t\geq 0$, $\forall v\in\calM$.
\item $\Phi(t,s,v)=\Phi(t,r,\zeta(r,s,v))\Phi(r,s,v)$, $\forall t\geq r\geq s\geq 0$, $\forall v\in\calM$.
\end{enumerate}
In this case, the pair $(\Phi,\zeta)$ is called a \emph{stochastic skew-evolution semiflow}.
\end{defn}

We discuss some illustrative examples. Firstly, stochastic evolution cocycles describe solutions of variational equations and Cauchy problems.
\begin{exa}
Let $\zeta:T\times\calM\to\calM$ be a stochastic evolution semiflow and $A:\calM\to\calL(\X)$ be a continuous map. Consider the differential equation
$$
x'(t)=A(\zeta(t,s,v))x(t),\quad x(s)=h.
$$
If $x(\cdot)$ is the unique solution of the equation above with the initial condition $x(s)=h$, then the map $\Phi:T\times\calM\to \calL(\X)$ defined by $\Phi(t,s,v)h=x(t)$ is a stochastic evolution cocycle.
\end{exa}

Secondly, an evolution family can be viewed as an evolution cocycle. For this, we recall that a two-parameter family $\{U(t,s)\}_{t\geq s\geq 0}$ of bounded linear operators on $\X$ is called an \emph{evolution family} if it satisfies the following conditions: (i) $U(t,t)=I$, $\forall t\geq 0$. (ii) $U(t,s)=U(t,r)U(r,s)$, $\forall t\geq r\geq s\geq 0$. 
\begin{exa}
If $\{U(t,s)\}_{t\geq s\geq 0}$ is an evolution family, then for any stochastic evolution semiflow $\zeta$, the map $\Phi_U$ defined by
$$
\Phi_U(t,s,v)=U(t,s),\quad t\geq s\geq 0, v\in\calM
$$
is a stochastic evolution cocycle. Thus, for any stochastic evolution semiflow $\zeta$, the pair $(\Phi_U,\zeta)$ is a stochastic skew-evolution semiflow.
\end{exa}

Furthermore, the stochastic evolution cocycles are also generalizations of stochastic cocycles (see \cite{PVH1, PVH2}).
\begin{exa}
If $\phi:\R_{\geq 0}\times\calM\to\calL(\X)$ is a stochastic cocycle associated to the stochastic semiflow $\varphi:\R_{\geq 0}\times\calM\to\calM$ (see \cite[Definition 2.2]{stoica2010uniform}), then the map $\Phi$ defined by $\Phi(t,s,v)=\phi(t-s,v)$ is a stochastic evolution cocycle associated to the evolution semiflow $\zeta$ in Example \ref{example-varphi}.
\end{exa}

Finally, stochastic evolution cocycles arise from stochastic differential equations (see \cite{arnold1974stochastic, da2014stochastic}, also see \cite{stoica2010uniform}).

Some definitions of asymptotic properties in the classical sense are given in the following.
\begin{defn}
A stochastic skew-evolution semiflow $(\zeta,\Phi)$ is said to be
\begin{enumerate}
\item \emph{polynomially bounded} if there exist constants $M,\theta,\omega>0$ such that
\begin{equation}\label{poly-growth-usual}
\|\Phi(t,s,y)x\|\leq M\left(\dfrac{t}{s}\right)^\omega\|x\|,
\end{equation}
for all $t\geq s\geq\theta$ and all $(y,x)\in\calM\times\X$.
\item \emph{polynomially stable} if it satisfies \eqref{poly-growth-usual} with $\omega<0$.
\end{enumerate}
\end{defn}

Equipping the probability measure $\mathbf{P}$ on $\calM$ can offer the concepts of stochastic stability. In connection with this, we recall that $\calL^1(\calM,\mathbf{P})$ is the Banach space of all Bochner measurable functions $g:\calM\to\X$ such that
$$
\|g\|_1=\int_{\calM}\|g(y)\|\,d\mathbf{P}(y)<\infty.
$$
Two functions in $\calL^1(\calM,\mathbf{P})$ are identified if $\mathbf{P}$-almost everywhere.
\begin{defn}
A stochastic skew-evolution semiflow $(\zeta,\Phi)$ is said to be
\begin{enumerate}
\item \emph{polynomially bounded in mean} if there exist constants $M,\theta,\omega>0$ such that
\begin{equation}\label{poly-growth}
\int_{\calM}\|\Phi(t,s,y)g(y)\|\,d\mathbf{P}(y)\leq M\left(\dfrac{t}{s}\right)^\omega\int_{\calM}\|g(y)\|\,d\mathbf{P}(y)
\end{equation}
for all $t\geq s\geq\theta$ and all $g\in\calL^1(\calM,\mathbf{P})$.
\item \emph{polynomially stable in mean} if it satisfies \eqref{poly-growth} with $\omega<0$.
\item \emph{polynomially unstable in mean} if there exist constants $K,\delta,\alpha>0$ such that
$$
\int_{\calM}\|\Phi(t,s,y)g(y)\|\,d\mathbf{P}(y)\geq K\left(\dfrac{t}{s}\right)^\alpha\int_{\calM}\|g(y)\|\,d\mathbf{P}(y)
$$
for all $t\geq s\geq\delta$ and all $g\in\calL^1(\calM,\mathbf{P})$.
\item \emph{injective in the stochastic mean} if for every $g\in\calL^1(\calM,\mathbf{P})$, we have
\begin{eqnarray}\label{injective-defn}
\int_{\calM}\|\Phi(t,s,y)g(y)\|\,d\mathbf{P}(y)>0.
\end{eqnarray}
\end{enumerate}
\end{defn}

\begin{rem}
We note the reader that any polynomial stable stochastic skew-evolution semiflow admits a polynomial stability in mean, but the converse direction fails to hold. In order to describe examples of stochastic skew-evolution semiflows that are polynomial stable in mean but that are not polynomial stable in the sense \eqref{poly-growth-usual}, we consider a partition $\calM=\cup_{j=0}^\ell\calM_j$ of $\calM$ into at most countably many sets, where the number $\ell$ may be finite or infinite. Assume that for each $j$, there exist $\alpha_j,M_j,\theta_j$ such that
$$
\int_{\calM_j}\|\Phi(t,s,y)g(y)\|\,d\mathbf{P}(y)\leq M_j\left(\dfrac{t}{s}\right)^{-\alpha_j}\int_{\calM_j}\|g(y)\|\,d\mathbf{P}(y)
$$
for all $t\geq s\geq\theta_j$ and all $g\in\calL^1(\calM,\mathbf{P})$. Moreover, assume that
$$
\alpha_0=0,\quad\calM_0\ne\emptyset,\quad\mu(\calM_0)=0,\quad\alpha=\inf\{\alpha_j:j\in\Z_{\geq 1}\}>0.
$$
Under these assumptions, $(\zeta,\Phi)$ is polynomially stable in mean. Since $\alpha_0=0$, $\calM_0\ne\emptyset$, and $\mu(\calM_0)=0$, it is not polynomially stable in the sense \eqref{poly-growth-usual}.
\end{rem}

\begin{rem}
It is straightforward to prove that if a stochastic skew-evolution semiflow is exponentially stable in mean, then it must be polynomially stable in mean. The converse direction is not valid. To give an
example for this claim, let $\zeta:T\times\calM\to\calM$ be the stochastic evolution semiflow in Example \ref{example}. The map $\Phi:T\times\calM\to\calL(\X)$ defined by
$$
\Phi(t,s,v)x=\dfrac{s+1}{t+1}x
$$
is a stochastic evolution cocycle associated to the evolution semiflow $\zeta$. For any $g\in\calL^1(\calM,\mathbf{P})$, we have
\begin{eqnarray}
\int_{\calM}\|\Phi(t,s,y)g(y)\|\,d\mathbf{P}(y)
\label{asfdg}&=&\dfrac{s+1}{t+1}\int_{\calM}\|g(y)\|\,d\mathbf{P}(y)\\
\nonumber&\leq&\dfrac{2s}{t}\int_{\calM}\|g(y)\|\,d\mathbf{P}(y),\quad\forall t\geq s\geq 1,
\end{eqnarray}
which gives that $(\zeta,\Phi)$ is polynomially stable in mean. We prove by a contradiction that $(\zeta,\Phi)$ is not exponentially stable in mean. Assume that it is exponentially stable in mean. Then there exist positive constants $K,\alpha,\delta$ such that
$$
\int_{\calM}\|\Phi(t,s,y)g(y)\|\,d\mathbf{P}(y) \leq Ke^{-\alpha (t-s)}\int_{\calM}\|g(y)\|\,d\mathbf{P}(y),\quad\forall t\geq s\geq 0,
$$
which implies, by \eqref{asfdg}, that $e^{\alpha t}(t+1)^{-1}\leq Ke^{\alpha s}(s+1)^{-1}$. Letting $t\to\infty$ gives the contradiction.
\end{rem}

\subsection{Banach function spaces}\label{Banach}
We always denote by $(\Omega,\Sigma,\mu)$ a positive $\sigma$-finite measure space, and by $\calF(\mu)$ the linear space of $\mu$-measurable functions from $\Omega$ to $\C$. Two functions in $\calF(\mu)$ are identical if $\mu$-almost everywhere.
\begin{defn}
A function $\calN:\calF(\mu)\to [0,\infty]$ is called a \emph{Banach function norm} if it satisfies
\begin{enumerate}
\item $\calN(g)=0$ if and only if $g=0$ $\mu$-almost everywhere;
\item if $|g|\leq |h|$ $\mu$-almost everywhere, then $\calN(g)\leq\calN(h)$;
\item $\calN(g+h)\leq\calN(g)+\calN(h)$, $\forall g,h\in \calF(\mu)$;
\item $\calN(zg)=|z|\calN(g)$, $\forall z\in\C$, $\forall g\in \calF(\mu)$ with $\calN(g)<\infty$.
\end{enumerate}
\end{defn}

Given a Banach function norm $\calN$, we consider $\Delta=\{f\in\calF(\mu):\calN(f)<\infty\}$. It can be proved that $\Delta$ is a normed linear space with respect to the norm defined by $|f|_\Delta:=\calN(f)$. The space $(\Delta,|\cdot|_\Delta)$ is called a \emph{Banach function space} over $\Omega$ if it is complete. For more details about the theory of Banach function spaces, we refer the reader to the monograph \cite{MNP}. 

In this paper, we are interested in two cases of $\Omega$:

- For $(\Omega,\Sigma,\mu)=(\Z_{\geq 1},\calP(\Z_{\geq 1}),\mu_c)$, where $\mu_c$ is the counting measure, we denote by $\calH(\Z_{\geq 1})$ the class of all Banach sequence spaces $\calE$ with
\begin{equation}\label{condi-HN}
\lim\limits_{p\to\infty}\inf_{n\in\Z_{\geq 1}}|\calX_{\{n,\cdots,pn\}}|_{\calE}=\infty.
\end{equation}
Interestingly, this class contains the space $l^p_w(\Z_{\geq 1})$, for $p>0$.

- For $(\Omega,\Sigma,\mu)=(\R_{\geq 1},\sigma(\R_{\geq 1}),m)$, where $m$ is the Lebesgue measure, we denote by $\calH(\R_{\geq 1})$ the class of all Banach function spaces $\calQ$ with
$$
\lim\limits_{t\to\infty}\inf_{s\geq 1}|\calX_{[s,ts)}|_{\calQ}=\infty.
$$
The class $\calH(\R_{\geq 1})$ contains the space $L_w^p(\R_{\geq 1})$, for $p>0$.

\begin{rem}\label{rem-import-banach-func}
If $A\in\calH(\R_{\geq 1})$, then the Banach sequence space
$$
S_A=\left\{\{\alpha_n\}_{n\geq 1}:\sum_{n=1}^\infty\alpha_n\calX_{[n,n+1)}\in A\right\}
$$
belongs to the class $\calH(\Z_{\geq 1})$ with respect to the norm
$$
|\{\alpha_n\}_{n\in\Z_{\geq 1}}|_{S_A}:=|\sum_{n=1}^\infty\alpha_n\calX_{[n,n+1)}|_A.
$$
\end{rem}

\section{Polynomial stability}
\subsection{Some initial properties}
In this subsection, we provide several facts that used to prove the main results. The following lemma offers a necessary and sufficient condition for a stochastic evolution cocycle to be polynomially stable in mean. It turns out that a polynomial stability in mean is related intimately to a contraction in the stochastic sense.
\begin{lem}\label{lemma-important}
Let $(\Phi,\zeta)$ be a stochastic skew-evolution semiflow, which is polynomially bounded in mean (that is \eqref{poly-growth} holds). Then it is polynomially stable in mean if and only if there exist $c\in (0,1)$, $\lambda\in\Z_{\geq 1}$, and $\delta>0$ such that the inequality
$$
\int_{\calM}\|\Phi(\lambda m,m,y)g(y)\|\,d\mathbf{P}(y)\leq c\int_{\calM}\|g(y)\|\,d\mathbf{P}(y)
$$
holds for every $m\in\Z_{\geq\delta}$ and every $g\in\calL^1(\calM,\mathbf{P})$.
\end{lem}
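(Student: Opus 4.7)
The two directions are rather asymmetric in difficulty. For necessity, suppose $(\Phi,\zeta)$ is polynomially stable in mean with constants $M,\theta>0$ and rate $\omega<0$; set $\alpha:=-\omega$. Pick $\lambda\in\Z_{\geq 1}$ large enough that $c:=M\lambda^{-\alpha}<1$, and set $\delta:=\theta$. Specializing \eqref{poly-growth} at $t=\lambda m$, $s=m$ for $m\in\Z_{\geq\delta}$ yields exactly the desired contraction with constant $c$.

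For sufficiency, the plan is a geometric iteration combined with polynomial boundedness to fill in intermediate times. First I would prove by induction on $n$ the estimate
\begin{equation*}
\int_{\calM} \|\Phi(\lambda^n m, m, y) g(y)\| \, d\mathbf{P}(y) \leq c^n \int_{\calM} \|g(y)\| \, d\mathbf{P}(y)
\end{equation*}
for all $n\in\Z_{\geq 0}$, $m\in\Z_{\geq\delta}$, and $g\in\calL^1(\calM,\mathbf{P})$; the base case is the hypothesis, and the step $n\to n+1$ uses the cocycle identity. With this in hand, given arbitrary $t\geq s$ with $s$ sufficiently large, I would set $m:=\lceil s\rceil$ and choose the unique $n$ with $\lambda^n m\leq t<\lambda^{n+1} m$; sandwiching the iterated contraction by polynomial boundedness on the short end-intervals $[s,m]$ and $[\lambda^n m,t]$ would give an overall bound of the form $M'c^n\int_{\calM}\|g(y)\|\,d\mathbf{P}(y)$. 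Since $n\geq\log_\lambda(t/s)-C$ for a universal constant $C$, the factor $c^n$ converts to a constant multiple of $(t/s)^{\log_\lambda c}$, and $\log_\lambda c<0$ delivers polynomial stability in mean.

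The main obstacle lies in the inductive step of the iteration. Decomposing via the cocycle gives $\Phi(\lambda^{n+1}m,m,y)=\Phi(\lambda^{n+1}m,\lambda^n m,\zeta(\lambda^n m,m,y))\Phi(\lambda^n m,m,y)$, whose outer factor is evaluated at the shifted base point $\zeta(\lambda^n m,m,y)$ rather than at $y$; consequently the contraction hypothesis does not apply verbatim to the auxiliary function $\tilde g(y):=\Phi(\lambda^n m,m,y)g(y)\in\calL^1(\calM,\mathbf{P})$. I expect this step to exploit additional structure of $\zeta$---most naturally a measure-preservation property of $\zeta(\lambda^n m,m,\cdot)$ under $\mathbf{P}$---to perform a change of variable $v=\zeta(\lambda^n m,m,y)$ that absorbs the shift and reduces the expression to the contraction hypothesis applied at the integer time $\lambda^n m\geq\delta$. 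Verifying that this reduction is legitimate in the stated generality is the subtle technical point on which the entire proof turns.
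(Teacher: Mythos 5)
Your skeleton is exactly the paper's proof: necessity by specializing \eqref{poly-growth} with $\omega<0$ at $t=\lambda m$, $s=m$; sufficiency by iterating the contraction to get $c^{k}$ decay along the times $\lambda^{k}m$, converting $c^{k}$ into $t^{-\alpha}$ with $\alpha=-\ln c/\ln\lambda$, and then interpolating with polynomial boundedness to pass from integer starting times (the paper routes a general $s$ through $[s]+1$, you route it through $\lceil s\rceil$; this is the same device). So as far as the architecture goes, you have reconstructed the intended argument.

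The issue you isolate in the inductive step is, however, a genuine one, and you should be aware that the paper does not resolve it: its proof consists of the single sentence ``we can prove by induction on $k$.'' The difficulty is exactly as you describe. The maps $T_{t,s}\colon g\mapsto\Phi(t,s,\cdot)g(\cdot)$ on $\calL^{1}(\calM,\mathbf{P})$ do \emph{not} satisfy $T_{t,s}=T_{t,r}T_{r,s}$, because the cocycle identity evaluates the outer factor at the shifted base point $\zeta(r,s,y)$; hence the hypothesis, applied to $\tilde g(y)=\Phi(\lambda^{k}m,m,y)g(y)$ at the integer time $\lambda^{k}m\ge\delta$, controls $\int_{\calM}\|\Phi(\lambda^{k+1}m,\lambda^{k}m,y)\tilde g(y)\|\,d\mathbf{P}(y)$ and not the quantity $\int_{\calM}\|\Phi(\lambda^{k+1}m,\lambda^{k}m,\zeta(\lambda^{k}m,m,y))\tilde g(y)\|\,d\mathbf{P}(y)$ that actually arises. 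The same base-point shift is silently ignored at every later point where the paper applies \eqref{poly-growth} to a composed expression (for instance in the passage from $\Phi(tm,m,\cdot)$ to $\Phi(\lambda^{p}m,m,\cdot)$, and in the step through $[s]+1$). Your proposed repair is the right one: since the hypothesis is quantified over \emph{all} $g\in\calL^{1}(\calM,\mathbf{P})$, it amounts (for separable $\X$) to the essential-sup bound $\|\Phi(\lambda m,m,y)\|\le c$ for $\mathbf{P}$-a.e.\ $y$, and the induction then closes provided the push-forward of $\mathbf{P}$ under $\zeta(t,s,\cdot)$ is absolutely continuous with respect to $\mathbf{P}$ (in particular if $\mathbf{P}$ is invariant under the semiflow). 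Without some such assumption neither your argument nor the paper's is complete; with it, your outline is correct and coincides with the paper's proof.
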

\begin{proof}
The necessity is clear. Let us prove the sufficiency. Take arbitrarily $g\in\calL^1(\calM,\mathbf{P})$. We can prove by induction on $k$ that
$$
\int_{\calM}\|\Phi(\lambda^k m,m,y)g(y)\|\,d\mathbf{P}(y)\leq c^k\int_{\calM}\|g(y)\|\,d\mathbf{P}(y),\quad\forall m\in\Z_{\geq\delta},\forall k\in\Z.
$$
Let $t\geq 1$, $m\in\Z$ with $m\geq\gamma:=\max\{\delta,\theta+1\}$. Setting
$$
\alpha=-\frac{\ln c}{\ln\lambda}, \,K_1=\frac{M\lambda^\omega}{c},\,K=\max\left\{M\left(1+\dfrac{1}{[\gamma]}\right)^{\omega+\alpha},K_1M\left(1+\dfrac{1}{[\gamma]}\right)^{\omega+\alpha}\right\}.
$$
Let $p=\max\left\{j\in\Z:t\lambda^{-j}\geq 1 \right\}$. Then we have $\lambda^p\leq t<\lambda^{p+1}$. By \eqref{poly-growth}, we estimate
\begin{eqnarray*}
\int_{\calM}\|\Phi(tm,m,y)g(y)\|\,d\mathbf{P}(y)
&\leq& M\left(\dfrac{t}{\lambda^p}\right)^{\omega}\int_{\calM}\|\Phi(\lambda^p m,m,y)\|\,d\mathbf{P}(y)\\
&\leq&M\lambda^{\omega}c^p\int_{\calM}\|g(y)\|\,d\mathbf{P}(y),
\end{eqnarray*}
which implies, as $p>\frac{\ln t}{\ln\lambda}-1$, that
\begin{eqnarray}
\int_{\calM}\|\Phi(tm,m,y)g(y)\|\,d\mathbf{P}(y)
\nonumber &\leq& M\lambda^\omega c^{\frac{\ln t}{\ln\lambda}-1}\int_{\calM}\|g(y)\|\,d\mathbf{P}(y)\\
\label{cvb}&=& K_1 t^{-\alpha}\int_{\calM}\|g(y)\|\,d\mathbf{P}(y).
\end{eqnarray}
Let $r\geq s\geq\gamma$. For $r$, there are the following possibilities.

- If $r\leq [s]+1$, then by \eqref{poly-growth}, we have
\begin{eqnarray*}
\int_{\calM}\|\Phi(r,s,y)g(y)\|\,d\mathbf{P}(y)
&\leq& M\left(1+\dfrac{1}{[\gamma]}\right)^\omega\int_{\calM}\|g(y)\|\,d\mathbf{P}(y)\\
&\leq& M\left(1+\dfrac{1}{[\gamma]}\right)^{\omega+\alpha}\left(\dfrac{r}{s}\right)^{-\alpha}\int_{\calM}\|g(y)\|\,d\mathbf{P}(y)\\
&\leq& K\left(\dfrac{r}{s}\right)^{-\alpha}\int_{\calM}\|g(y)\|\,d\mathbf{P}(y).
\end{eqnarray*}

- If $r\geq [s]+1$, then by \eqref{cvb} and \eqref{poly-growth}, we estimate
\begin{eqnarray*}
&&\int_{\calM}\|\Phi(r,s,y)g(y)\|\,d\mathbf{P}(y)\\
&&\leq K_1\left(\dfrac{r}{[s]+1}\right)^{-\alpha}\int_{\calM}\|\Phi([s]+1,s,y)g(y)\|\,d\mathbf{P}(y)\\
&&\leq K_1\left(\dfrac{r}{[s]+1}\right)^{-\alpha}M\left(\dfrac{[s]+1}{s}\right)^\omega\int_{\calM}\|g(y)\|\,d\mathbf{P}(y)\\
&&\leq K\left(\dfrac{r}{s}\right)^{-\alpha}\int_{\calM}\|g(y)\|\,d\mathbf{P}(y),
\end{eqnarray*}
where the last inequality uses the followings
$$
([s]+1)^\alpha\leq (s+1)^\alpha\leq s^\alpha\left(1+\dfrac{1}{[\gamma]}\right)^\alpha
$$
and
$$
\left(\dfrac{[s]+1}{s}\right)^\omega\leq\left(1+\dfrac{1}{[s]}\right)^\omega\leq\left(1+\dfrac{1}{[\gamma]}\right)^\omega.
$$
\end{proof}

The following lemma studies a uniform boundedness in mean of an evolution cocycle.
\begin{lem}\label{t-bdd}
Let $(\Phi,\zeta)$ be a stochastic skew-evolution semiflow, which is polynomially bounded in mean (that is \eqref{poly-growth} holds). Assume that there exist constants $\delta,L>0$, an above unbounded sequence $\{t_n\}\in\calS(a,b)$ such that the inequality
$$
\int_{\calM}\|\Phi(t_nm,m,y)g(y)\|\,d\mathbf{P}(y)\leq L\int_{\calM}\|g(y)\|\,d\mathbf{P}(y)
$$
holds for every $n\in\Z_{\geq 1}$, $m\in\Z_{\geq\delta}$, and every $g\in\calL^1(\calM,\mathbf{P})$. Then there exists a constant $K>0$ such that the inequality
$$
\int_{\calM}\|\Phi(ts,s,y)g(y)\|\,d\mathbf{P}(y)\leq K\int_{\calM}\|g(y)\|\,d\mathbf{P}(y)
$$
holds for every $t\geq 1$, $s\geq\max\{\delta,\theta+1\}$, and every $g\in\calL^1(\calM,\mathbf{P})$.
\end{lem}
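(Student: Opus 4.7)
The plan is to reduce the general estimate (for arbitrary $t\geq 1$ and real $s\geq\gamma:=\max\{\delta,\theta+1\}$) to the standing hypothesis (which only controls $\Phi(t_nm,m,\cdot)$ at integer base points $m\geq\delta$ and at the specific dilations $t_n$), using the cocycle identity together with \eqref{poly-growth} to patch the short arcs whose lengths are a priori bounded by constants depending only on $\gamma$, $a$, and $b$.

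In the first step I would pass from real $s$ to an integer. Setting $m=[s]+1$, one has $m\in\Z_{\geq\delta}$, $m\geq\theta+1$, and $1\leq m/s\leq 1+1/\gamma$. If $ts<m$ then already $t<1+1/\gamma$ and \eqref{poly-growth} directly yields
$$
\int_{\calM}\|\Phi(ts,s,y)g(y)\|\,d\mathbf{P}(y)\leq M(1+1/\gamma)^\omega\int_{\calM}\|g(y)\|\,d\mathbf{P}(y).
$$
Otherwise $ts\geq m$, and factoring $\Phi(ts,s)=\Phi(ts,m)\Phi(m,s)$ via the cocycle identity allows \eqref{poly-growth} (applied to the inner factor) to transfer the problem, at the cost of the universal multiplicative constant $M(1+1/\gamma)^\omega$, to bounding $\int_{\calM}\|\Phi(\tilde tm,m,y)h(y)\|\,d\mathbf{P}(y)$ uniformly in $\tilde t:=ts/m\geq 1$, integer $m\in\Z_{\geq\gamma}$, and $h\in\calL^1(\calM,\mathbf{P})$.

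In the second step I would dichotomize the reduced estimate on the size of $\tilde t$ relative to $\{t_n\}$. If $\tilde t\leq t_1$, then polynomial boundedness immediately supplies the constant bound $Mt_1^\omega$. Otherwise, since $\{t_n\}$ is non-decreasing and unbounded, there is a unique $n\geq 1$ with $t_n\leq\tilde t<t_{n+1}$, and the splitting $\Phi(\tilde tm,m)=\Phi(\tilde tm,t_nm)\Phi(t_nm,m)$ reduces the task to bounding the inner factor by the hypothesis (constant $L$) and the outer factor by \eqref{poly-growth} (constant $M(\tilde t/t_n)^\omega$). The crucial absorbing step is $t_{n+1}/t_n\leq a\cdot 2^b$, obtained by specializing $\{t_n\}\in\calS(a,b)$ at $m=2$ to give $t_{2n}\leq a\cdot 2^b\,t_n$ and invoking monotonicity of $\{t_n\}$ to conclude $t_{n+1}\leq t_{2n}$.

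The main technical obstacle, inherited from Lemma \ref{lemma-important}, is to justify that a cocycle split may be followed by successive $\calL^1$-norm estimates as though the two factors were genuine operators. Concretely, after expanding
$$
\int_{\calM}\|\Phi(\tilde tm,m,y)h(y)\|\,d\mathbf{P}(y)=\int_{\calM}\|\Phi(\tilde tm,t_nm,\zeta(t_nm,m,y))\Phi(t_nm,m,y)h(y)\|\,d\mathbf{P}(y),
$$
one must treat the outer factor as acting on the $\calL^1$ function $y\mapsto\Phi(t_nm,m,y)h(y)$ via push-forward by $\zeta(t_nm,m,\cdot)$; I would follow the same iteration style tacitly used in the proof of Lemma \ref{lemma-important} to carry this out and to assemble a $K$ of the form $M(1+1/\gamma)^\omega\max\{Mt_1^\omega,\,M(a\cdot 2^b)^\omega L\}$.
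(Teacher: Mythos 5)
Your argument is correct and follows essentially the same route as the paper's proof: both reduce the real base point $s$ to the integer $[s]+1$ via \eqref{poly-growth} and locate $t$ (resp.\ $\tilde t$) inside the sequence $\{t_n\}$ so that the overshoot ratio is controlled by $a2^{b}$ using \eqref{tab} with $m=2$; the only differences are the order in which these two reductions are performed and the indexing of the dichotomy over $\{t_n\}$. The composition-through-$\zeta$ subtlety you flag at the end is left equally implicit in the paper's own proof, so it is not a gap specific to your write-up.
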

\begin{proof}
Let $t\geq 1$, $g\in\calL^1(\calM,\mathbf{P})$, and $m\in\Z_{\geq\gamma}$, where $\gamma:=\max\{\delta,\theta+1\}$. Since $\lim\limits_{n\to\infty}t_n=\infty$, we can find $p\in\Z$ satisfying
$$
t_{2d}\geq t,\quad\forall d\geq p.
$$
Setting $j=\min\{k\in\{1,\cdots,p\}:t\leq t_{2k}\}$. There are two possibilities of $j$.

\noindent - If $j=1$, then $t\leq t_2$, and so by \eqref{poly-growth}
$$
\int_{\calM}\|\Phi(tm,m,y)g(y)\|\,d\mathbf{P}(y)\leq Mt_2^\omega\int_{\calM}\|g(y)\|\,d\mathbf{P}(y).
$$

\noindent - If $j\geq 2$, then $t\leq t_{2j}$ and $t>t_{2(j-1)}\geq t_j$. By \eqref{poly-growth}, we have
\begin{eqnarray*}
\int_{\calM}\|\Phi(tm,m,y)g(y)\|\,d\mathbf{P}(y)
&\leq& M\left(\dfrac{t}{t_j}\right)^\omega\int_{\calM}\|\Phi(t_j m,m,y)g(y)\|\,d\mathbf{P}(y)\\
&\leq& ML\left(\dfrac{t}{t_j}\right)^\omega\int_{\calM}\|g(y)\|\,d\mathbf{P}(y)\\
&\leq& ML2^{\omega b}a^\omega\int_{\calM}\|g(y)\|\,d\mathbf{P}(y),
\end{eqnarray*}
where the last inequality holds by condition \eqref{tab}, and $t\leq t_{2j}$. Thus, we get
\begin{eqnarray}\label{kjl}
\int_{\calM}\|\Phi(tm,m,y)g(y)\|\,d\mathbf{P}(y)\leq K_1\int_{\calM}\|g(y)\|\,d\mathbf{P}(y),
\end{eqnarray}
where $K_1=\max\left\{Mt_2^\omega,ML2^{\omega b}a^\omega\right\}$.

Let $s\geq\gamma$. We have two cases of $s$ as follows.

\noindent - If $ts\leq [s]+1$, then by \eqref{poly-growth} we see
\begin{eqnarray*}
\int_{\calM}\|\Phi(ts,s,y)g(y)\|\,d\mathbf{P}(y)
&\leq& M\left(1+\dfrac{1}{[\gamma]}\right)^\omega\int_{\calM}\|g(y)\|\,d\mathbf{P}(y).
\end{eqnarray*}

\noindent - If $ts\geq [s]+1$, then by \eqref{kjl} and \eqref{poly-growth} we estimate
\begin{eqnarray*}
\int_{\calM}\|\Phi(ts,s,y)g(y)\|\,d\mathbf{P}(y)
&\leq& K_1\int_{\calM}\|\Phi([s]+1,s,y)g(y)\|\,d\mathbf{P}(y)\\
&\leq& K_1M\left(\dfrac{[s]+1}{s}\right)^\omega\int_{\calM}\|g(y)\|\,d\mathbf{P}(y).
\end{eqnarray*}
By choosing
$$
K:=\max\left\{M\left(1+\dfrac{1}{[\gamma]}\right)^\omega, K_1M\left(1+\dfrac{1}{[\gamma]}\right)^\omega\right\},
$$
we obtain the desired conclusion.
\end{proof}

\subsection{Discrete-time version}
With all preparation in place, we now are ready to state and prove the first main result, which is a discrete-time version of the Datko-type theorem.
\begin{thm}\label{datko-1}
Let $(\Phi,\zeta)$ be a stochastic skew-evolution semiflow, which is polynomially bounded in mean (that is \eqref{poly-growth} holds). Then it is polynomially stable in mean if and only if there exist $\delta>0$, a Banach sequence space $\calE\in\calH(\Z_{\geq 1})$, and $\{t_n\}\in\calS(a,b)$ such that
\begin{enumerate}
\item for every $(s,g)\in\R_{\geq\delta}\times\calL^1(\calM,\mathbf{P})$, the sequence $\psi_{s,g}:\Z_{\geq 1}\to\R_{\geq 0}$
$$
\psi_{s,g}(j)=\int_{\calM}\|\Phi(t_j s,s,y)g(y)\|\,d\mathbf{P}(y)
$$
belongs to $\calE$.
\item there exists $K>0$ such that
$$|\psi_{s,g}(\cdot)|_{\calE}\leq K\int_{\calM}\|g(y)\|\,d\mathbf{P}(y),\quad\forall s\in\R_{\geq\delta},\forall g\in\calL^1(\calM,\mathbf{P}).$$
\end{enumerate}
\end{thm}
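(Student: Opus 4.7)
The proof splits cleanly into necessity (easy) and sufficiency (the substantive half). For necessity I would choose $t_n=n\in\calS(1,1)$ and $\calE=l^1_w(\Z_{\geq 1})\in\calH(\Z_{\geq 1})$ (the latter because $|\calX_{\{n,\dots,pn\}}|_{l^1_w}\geq\log p$). Polynomial stability with rate $\omega<0$ gives $\psi_{s,g}(j)\leq Mj^\omega\|g\|_1$ for $s\geq\theta$, so
\[
|\psi_{s,g}|_{l^1_w}=\sum_{j=1}^\infty\frac{\psi_{s,g}(j)}{j}\leq M\|g\|_1\sum_{j=1}^\infty j^{\omega-1}=:K\|g\|_1,
\]
with $K$ finite since $\omega-1<-1$.

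For sufficiency the plan is to verify the hypothesis of Lemma~\ref{lemma-important} after an intermediate application of Lemma~\ref{t-bdd}, in three stages. \emph{Stage 1 (uniform pointwise bound).} Fix $p\geq 2$ and set $n_0=\lfloor n/p\rfloor$ for $n\geq p$, so $pn_0\leq n\leq 2pn_0$. The cocycle identity combined with polynomial boundedness in mean and the $\calS(a,b)$-estimate $t_n/t_{n_0}\leq t_{2pn_0}/t_{n_0}\leq a(2p)^b$ yields $\psi_{m,g}(n)\leq C_p\,\psi_{m,g}(k)$ for all $k\in\{n_0,\dots,pn_0\}$, where $C_p:=M(a(2p)^b)^\omega$. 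Multiplying by $\calX_{\{n_0,\dots,pn_0\}}$ and taking the $\calE$-norm gives $\psi_{m,g}(n)\leq C_pK\|g\|_1/\inf_{n'}|\calX_{\{n',\dots,pn'\}}|_\calE$; indices $n<p$ are controlled directly by polynomial boundedness. This yields a constant $L$ with $\psi_{m,g}(n)\leq L\|g\|_1$ uniformly for $n\in\Z_{\geq 1}$ and $m\in\Z_{\geq\delta_1}$, where $\delta_1:=\max\{[\delta]+1,[\theta]+1\}$. \emph{Stage 2.} Lemma~\ref{t-bdd}, whose hypothesis is exactly Stage~1, upgrades this to
\[
\int_\calM\|\Phi(ts,s,y)g(y)\|\,d\mathbf{P}(y)\leq K_1\|g\|_1,\qquad t\geq 1,\;s\geq\delta_2:=\max\{\delta_1,\theta+1\}.
\]

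\emph{Stage 3 (strict contraction).} By the $\calH$-condition, choose $p_0\in\Z_{\geq 1}$ so large that $|\calX_{\{1,\dots,p_0\}}|_\calE\geq 2KK_1$. Since
\[
\min_{k\in\{1,\dots,p_0\}}\psi_{m,g}(k)\cdot|\calX_{\{1,\dots,p_0\}}|_\calE\leq|\psi_{m,g}|_\calE\leq K\|g\|_1,
\]
there is $k^*=k^*(m,g)\in\{1,\dots,p_0\}$ with $\psi_{m,g}(k^*)\leq\|g\|_1/(2K_1)$. Set the universal integer $\lambda:=\lceil t_{p_0}\rceil$ and split
\[
\Phi(\lambda m,m,y)=\Phi(\lambda m,t_{k^*}m,\zeta(t_{k^*}m,m,y))\,\Phi(t_{k^*}m,m,y)
\]
via the cocycle. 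Applying Stage~2 to the outer factor yields
\[
\int_\calM\|\Phi(\lambda m,m,y)g(y)\|\,d\mathbf{P}(y)\leq K_1\,\psi_{m,g}(k^*)\leq\tfrac{1}{2}\|g\|_1
\]
for every $m\in\Z_{\geq\delta_2}$ and $g\in\calL^1(\calM,\mathbf{P})$, which is precisely the hypothesis of Lemma~\ref{lemma-important} with $c=\tfrac{1}{2}$. Polynomial stability in mean follows.

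The principal obstacle is Stage~3: the minimum-over-a-block trick produces only an index $k^*$ depending on $g$ and $m$, while Lemma~\ref{lemma-important} demands a \emph{universal} integer $\lambda$. Stage~2's uniform bound is the key ingredient that absorbs this dependence, propagating the smallness $\psi_{m,g}(k^*)\leq\|g\|_1/(2K_1)$ forward to the fixed time $\lambda m$ at the cost of only a single multiplicative factor $K_1$, rather than a $k^*$-dependent blow-up coming from polynomial boundedness. A secondary subtlety, silently inherited from the preceding two lemmas, is that turning the pointwise cocycle identity into an $\calL^1$-mean estimate requires treating $\Phi(\cdot,\cdot,\zeta(\cdot,\cdot,y))$ on the same footing as $\Phi(\cdot,\cdot,y)$ inside the $\mathbf{P}$-integral; I would use this in precisely the form adopted in the proofs of Lemmas~\ref{lemma-important} and~\ref{t-bdd}.
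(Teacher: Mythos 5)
Your overall architecture coincides with the paper's: a block argument giving a uniform bound on $\psi_{m,g}(n)$, an application of Lemma \ref{t-bdd} to upgrade it to all real $t\geq 1$, and the condition \eqref{condi-HN} to extract a strict contraction feeding Lemma \ref{lemma-important}. Only your Stage 3 is executed differently: you minimize over the fixed block $\{1,\dots,p_0\}$ to get an $(m,g)$-dependent index $k^*$ and then push forward to the universal time $\lambda=\lceil t_{p_0}\rceil$ via the Stage-2 bound, whereas the paper bounds $\int_{\calM}\|\Phi(t_\ell s,s,y)g(y)\|\,d\mathbf{P}(y)$ from below by $L^{-1}$ times each $\psi_{s,g}(j)$, $j\leq\ell$, takes $\calE$-norms, and lets $\ell$ grow. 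Both variants are sound and cost nothing either way; your closing remark about reading the cocycle identity inside the $\mathbf{P}$-integral is exactly the convention the paper itself uses in Lemmas \ref{lemma-important} and \ref{t-bdd}, so you are consistent with it.

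There is, however, one genuine gap: you never address the possibility that $\{t_n\}$ is bounded above. The class $\calS(a,b)$ contains bounded sequences (e.g.\ $t_n\equiv 1$ when $a\geq 1$), and Lemma \ref{t-bdd} --- your Stage 2 --- explicitly hypothesizes an \emph{above unbounded} sequence: its proof uses $\lim_n t_n=\infty$ to find, for each $t\geq 1$, some $t_{2k}\geq t$. If $\mathbb{T}:=\sup_n t_n<\infty$, the discrete hypothesis carries no information about times beyond $\mathbb{T}$, so the uniform bound for all $t\geq 1$ that your Stage 3 needs (to control the outer factor $\Phi(\lambda m,t_{k^*}m,\cdot)$) does not follow. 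The paper isolates this as a separate degenerate case: when $\mathbb{T}<\infty$ one has $\psi_{s,g}\geq M^{-1}(\mathbb{T}/t_1)^{-\omega}\int_{\calM}\|\Phi(\mathbb{T}s,s,y)g(y)\|\,d\mathbf{P}(y)\,\calX_{\{1,\dots,k\}}$ for every $k$, and letting $k\to\infty$ against the norm bound and \eqref{condi-HN} forces $\int_{\calM}\|\Phi(\mathbb{T}s,s,y)g(y)\|\,d\mathbf{P}(y)=0$, after which Lemma \ref{lemma-important} applies trivially. You should add this case (or show that conditions (1)--(2) are incompatible with a bounded nondegenerate $\{t_n\}$). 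A smaller point in the same spirit: in Stage 1 you divide by $\inf_{n'}|\calX_{\{n',\dots,pn'\}}|_{\calE}$, so you must first choose $p$ large enough that this infimum is positive --- again supplied by \eqref{condi-HN}, but not by merely fixing $p\geq 2$.
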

\begin{proof}
$\bullet$ Necessity. It is immediate by taking $\calE=l^1_w(\Z_{\geq 1})$ and $t_n=n$.

$\bullet$ Sufficiency. Let $\gamma :=\max\{\delta,\theta+1\}$, $g\in\calL^1(\calM,\mathbf{P})$, and $n\in\Z_{\geq\gamma}$. There are two cases of $\{t_n\}$.

{\bf Case 1:} The sequence $\{t_n\}$ is above bounded. In this case, we can suppose that $p:=\sup\limits\{t_n:n\in\Z_{\geq 1}\}<\infty$.

Let $k\in\Z_{\geq 1}$. For every $j\in\{1,\cdots,k\}$, by \eqref{poly-growth} we have
\begin{eqnarray*}
\int_{\calM}\|\Phi(ps,s,y)g(y)\|\,d\mathbf{P}(y)
&\leq& M\left(\dfrac{p}{t_1}\right)^\omega\int_{\calM}\|\Phi(t_js,s,y)g(y)\|\,d\mathbf{P}(y),
\end{eqnarray*}
which gives
\begin{eqnarray*}
\psi_{s,g}(\cdot)\geq M^{-1}\left(\dfrac{p}{t_1}\right)^{-\omega}\int_{\calM}\|\Phi(ps,s,y)g(y)\|\,d\mathbf{P}(y)\calX_{\{1,\cdots,k\}},
\end{eqnarray*}
and hence, we can estimate
\begin{eqnarray*}
&& K\int_{\calM}\|g(y)\|\,d\mathbf{P}(y)\geq |\psi_{s,g}(\cdot)|_{\calE}\\
&&\geq M^{-1}\left(\dfrac{p}{t_1}\right)^{-\omega}\int_{\calM}\|\Phi(ps,s,y)g(y)\|\,d\mathbf{P}(y)\cdot|\calX_{\{1,\cdots,k\}}|_{\calE}.
\end{eqnarray*}
By condition \eqref{condi-HN}, we must have
$$
\int_{\calM}\|\Phi(ps,s,y)g(y)\|\,d\mathbf{P}(y)=0,\quad\forall s\in\R_{\geq\gamma},\forall g\in\calL^1(\calM,\mathbf{P}),
$$
and so, by Lemma \ref{lemma-important}, $(\Phi,\zeta)$ is polynomially stable in mean.

{\bf Case 2:} The sequence $\{t_n\}_{n\in\Z}$ is above unbounded.

Also by condition \eqref{condi-HN}, we can find $p$ with
$$
|\calX_{\{m,\cdots,mp\}}|_{\calE}\geq 1,\quad\forall m\in\Z_{\geq 1}.
$$
Let $k\in\Z_{\geq 1}$. Then there are $m\in\Z$, $r\in [0,p)$ such that $k=mp+r$. We consider two possibilities of $m$ as follows.

\noindent - If $m=0$, then by \eqref{poly-growth} we estimate
\begin{eqnarray*}
\int_{\calM}\|\Phi(t_k s,s,y)g(y)\|\,d\mathbf{P}(y)
&\leq& Mt_p^\omega\int_{\calM}\|g(y)\|\,d\mathbf{P}(y).
\end{eqnarray*}

\noindent - If $m\geq 1$, then for every $j\in\{m,\cdots,mp\}$, by \eqref{poly-growth} we have
\begin{eqnarray*}
\int_{\calM}\|\Phi(t_k s,s,y)g(y)\|\,d\mathbf{P}(y)
&\leq& M\left(\dfrac{t_k}{t_j}\right)^\omega\int_{\calM}\|\Phi(t_j s,s,y)g(y)\|\,d\mathbf{P}(y)\\
&\leq& M\left(\dfrac{t_{2mp}}{t_m}\right)^\omega\int_{\calM}\|\Phi(t_j s,s,y)g(y)\|\,d\mathbf{P}(y)\\
&\leq& Ma^\omega 2^{\omega b} p^{\omega b}\int_{\calM}\|\Phi(t_j s,s,y)g(y)\|\,d\mathbf{P}(y).
\end{eqnarray*}
The last inequality can be rewritten as
$$
\psi_{s,g}(\cdot)\geq M^{-1}a^{-\omega} 2^{-\omega b} p^{-\omega b}\int_{\calM}\|\Phi(t_k s,s,y)g(y)\|\,d\mathbf{P}(y)\calX_{\{m,\cdots,mp\}},
$$
and so we can estimate
\begin{eqnarray*}
&& K\int_{\calM}\|g(y)\|\,d\mathbf{P}(y)\geq |\psi_{s,g}(\cdot)|_{\calE}\\
&&\geq M^{-1}a^{-\omega} 2^{-\omega b} p^{-\omega b}\int_{\calM}\|\Phi(t_k s,s,y)g(y)\|\,d\mathbf{P}(y)\cdot|\calX_{\{m,\cdots,mp\}}|_{\calE}\\
&&\geq M^{-1}a^{-\omega} 2^{-\omega b} p^{-\omega b}\int_{\calM}\|\Phi(t_k s,s,y)g(y)\|\,d\mathbf{P}(y).
\end{eqnarray*}
Thus, both cases unveil that
$$
\int_{\calM}\|\Phi(t_k s,s,y)g(y)\|\,d\mathbf{P}(y)\leq \max\left\{Mt_p^\omega,KMa^{\omega} 2^{\omega b} p^{\omega b}\right\}\int_{\calM}\|g(y)\|\,d\mathbf{P}(y),
$$
and hence, by Lemma \ref{t-bdd}, there exists a constant $L>0$ such that
$$
\int_{\calM}\|\Phi(ts,s,y)g(y)\|\,d\mathbf{P}(y)\leq L\int_{\calM}\|g(y)\|\,d\mathbf{P}(y),\quad\forall t\geq 1,\forall s\geq\gamma.
$$

Let $\ell\in\Z_{\geq 1}$. For every $j\in\{1,\cdots,\ell\}$, we can estimate
$$
\int_{\calM}\|\Phi(t_{\ell} s,s,y)g(y)\|\,d\mathbf{P}(y)\leq L\int_{\calM}\|\Phi(t_j s,s,y)g(y)\|\,d\mathbf{P}(y),
$$
which is equivalent to
$$
\psi_{s,g}(\cdot)\geq L^{-1}\int_{\calM}\|\Phi(t_{\ell} s,s,y)g(y)\|\,d\mathbf{P}(y)\calX_{\{1,\cdots,\ell\}}.
$$
By the assumption, we estimate
\begin{eqnarray*}
K\int_{\calM}\|g(y)\|\,d\mathbf{P}(y)
&\geq& |\psi_{s,g}(\cdot)|_{\calE}\geq L^{-1}\int_{\calM}\|\Phi(t_{\ell} s,s,y)g(y)\|\,d\mathbf{P}(y)\cdot |\calX_{\{1,\cdots,\ell\}}|_{\calE}.
\end{eqnarray*}
On the other hand, by \eqref{poly-growth},
\begin{eqnarray*}
&&\int_{\calM}\|\Phi(([t_\ell]+1)s,s,y)g(y)\|\,d\mathbf{P}(y)\\
&&\leq M\left(\dfrac{[t_\ell]+1}{t_\ell}\right)^\omega\int_{\calM}\|\Phi(t_{\ell} s,s,y)g(y)\|\,d\mathbf{P}(y)\\
&&\leq M2^\omega\int_{\calM}\|\Phi(t_{\ell} s,s,y)g(y)\|\,d\mathbf{P}(y).
\end{eqnarray*}
Thus, we have
$$
\int_{\calM}\|\Phi(([t_\ell]+1)s,s,y)g(y)\|\,d\mathbf{P}(y)\cdot |\calX_{\{1,\cdots,\ell\}}|_{\calE}\leq KLM2^{\omega}\int_{\calM}\|g(y)\|\,d\mathbf{P}(y).
$$
By \eqref{condi-HN} we can choose $\ell$ with $|\calX_{\{1,\cdots,\ell\}}|_{\calE} \geq 2KLM2^{\omega}$, and so by Lemma \ref{lemma-important} we get the desired result.
\end{proof}

The following result is a direct consequence of the theorem above.
\begin{cor}
Let $(\Phi,\zeta)$ be a stochastic skew-evolution semiflow, which is polynomially bounded in mean (that is \eqref{poly-growth} holds). Then it is polynomially stable in mean if and only if there exist positive constants $K,\delta$ such that the inequality
$$
\sum_{j=1}^\infty\dfrac{1}{j}\int_{\calM}\|\Phi(js,s,y)g(y)\|\,d\mathbf{P}(y)\leq K\int_{\calM}\|g(y)\|\,d\mathbf{P}(y)
$$
holds for every $s\in\R_{\geq\delta}$ and every $g\in\calL^1(\calM,\mathbf{P})$.
\end{cor}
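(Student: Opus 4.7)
The plan is to derive this corollary directly from Theorem \ref{datko-1} by choosing the Banach sequence space $\calE = l^1_w(\Z_{\geq 1})$ together with the sequence $t_n = n$. Under these choices, the sequence
$$\psi_{s,g}(j) = \int_{\calM}\|\Phi(js,s,y)g(y)\|\,d\mathbf{P}(y)$$
has norm $|\psi_{s,g}(\cdot)|_{\calE} = \sum_{j=1}^\infty \frac{1}{j}\,\psi_{s,g}(j)$, so condition (2) of the theorem coincides precisely with the stated integral inequality.

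Before invoking the theorem, I would verify the two structural conditions on $(\calE, \{t_n\})$. The sequence $t_n = n$ satisfies $t_{mn}/t_n = m \leq 1\cdot m^1$, so $\{t_n\} \in \calS(1,1)$. To confirm that $l^1_w(\Z_{\geq 1}) \in \calH(\Z_{\geq 1})$, I would estimate
$$|\calX_{\{n,\ldots,pn\}}|_{l^1_w} = \sum_{j=n}^{pn}\frac{1}{j} \;\geq\; \int_n^{pn+1}\frac{dt}{t} \;\geq\; \ln p$$
uniformly in $n \in \Z_{\geq 1}$, which tends to $\infty$ as $p \to \infty$, establishing \eqref{condi-HN}.

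For the sufficiency direction, the hypothesis of the corollary is exactly $|\psi_{s,g}(\cdot)|_{\calE} \leq K\int_{\calM}\|g(y)\|\,d\mathbf{P}(y)$ for every $s \in \R_{\geq\delta}$ and every $g \in \calL^1(\calM,\mathbf{P})$, so both conditions of Theorem \ref{datko-1} are met and polynomial stability in mean follows. For the necessity direction, polynomial stability in mean provides \eqref{poly-growth} with some $\omega = -\alpha$ where $\alpha>0$; taking $t = js$ and summing over $j$ yields
$$\sum_{j=1}^\infty \frac{1}{j}\int_{\calM}\|\Phi(js,s,y)g(y)\|\,d\mathbf{P}(y) \;\leq\; M\left(\sum_{j=1}^\infty \frac{1}{j^{1+\alpha}}\right)\int_{\calM}\|g(y)\|\,d\mathbf{P}(y),$$
valid for all $s \geq \theta$, so one may take $\delta := \theta$ and $K := M\sum_{j\geq 1} j^{-1-\alpha}$, which is finite since $1+\alpha > 1$.

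No substantive obstacle arises: the argument reduces to verifying that the specific pair $(l^1_w(\Z_{\geq 1}),\{n\}_{n \geq 1})$ fits the hypotheses of Theorem \ref{datko-1}, combined with convergence of a $p$-series in the necessity direction.
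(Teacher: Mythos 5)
Your proof is correct and follows exactly the route the paper intends: the corollary is obtained from Theorem \ref{datko-1} by specializing to $\calE=l^1_w(\Z_{\geq 1})$ and $t_n=n$ (the same choices the paper uses for the necessity part of that theorem), and your verifications that $\{n\}\in\calS(1,1)$, that $l^1_w(\Z_{\geq 1})$ satisfies \eqref{condi-HN}, and that the necessity reduces to a convergent $p$-series are all sound.
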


\subsection{Continuous-time version}
In this subsection, we give a continuous-time version of the Datko-type theorem by making use of Theorem \ref{datko-1}.
\begin{thm}\label{datko-2}
Let $(\Phi,\zeta)$ be a stochastic skew-evolution semiflow, which is polynomially bounded in mean (that is \eqref{poly-growth} holds). Then it is polynomially stable in mean if and only if there exist $\delta>0$, $\calQ\in\calH(\R_{\geq 1})$ such that
\begin{enumerate}
\item for every $(s,g)\in\R_{\geq\delta}\times\calL^1(\calM,\mathbf{P})$, the function $f_{s,g}:\R_{\geq 1}\to\R_{\geq 0}$
\begin{equation*}
f_{s,g}(t)=\int_{\calM}\|\Phi(ts,s,y)g(y)\|\,d\mathbf{P}(y)
\end{equation*}
belongs to $\calQ$;
\item there exists $K>0$ such that
$$|f_{s,g}(\cdot)|_{\calQ}\leq K\int_{\calM}\|g(y)\|\,d\mathbf{P}(y),\quad\forall s\in\R_{\geq\delta},\forall g\in\calL^1(\calM,\mathbf{P}).$$
\end{enumerate}
\end{thm}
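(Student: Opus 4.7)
The plan is to deduce Theorem \ref{datko-2} from the discrete-time version (Theorem \ref{datko-1}) via the passage from a continuous Banach function space to its associated Banach sequence space provided by Remark \ref{rem-import-banach-func}.

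For the necessity, I would take $\calQ=L_w^1(\R_{\geq 1})$, which lies in $\calH(\R_{\geq 1})$ since $|\calX_{[s,ts)}|_{L_w^1}=\ln t$ is independent of $s$ and tends to infinity. The polynomial stability hypothesis gives $f_{s,g}(t)\leq M t^{-\alpha}\int_\calM\|g(y)\|\,d\mathbf{P}(y)$ for some $\alpha>0$, which is integrable against $dt/t$ on $[1,\infty)$, yielding at once both the membership and the required uniform norm bound.

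For the sufficiency, I would set $\calE:=S_\calQ$, which belongs to $\calH(\Z_{\geq 1})$ by Remark \ref{rem-import-banach-func}, and choose $t_n:=n+1$. A direct computation shows $\{t_n\}\in\calS(1,1)$ since $(mn+1)/(n+1)\leq m$ for all $m,n\in\Z_{\geq 1}$. The key pointwise estimate is that for every $n\in\Z_{\geq 1}$ and $t\in[n,n+1)$, the cocycle property and the polynomial growth in mean (exactly in the form already exploited in Lemmas \ref{lemma-important} and \ref{t-bdd}) yield
$$
f_{s,g}(n+1)\leq M\Bigl(\dfrac{n+1}{t}\Bigr)^{\omega}f_{s,g}(t)\leq M\,2^{\omega}f_{s,g}(t).
$$
Consequently the step function $\sum_{n\geq 1}f_{s,g}(n+1)\calX_{[n,n+1)}$ is dominated pointwise on $\R_{\geq 1}$ by $M\,2^{\omega}f_{s,g}$, and hence belongs to $\calQ$ with $\calQ$-norm at most $M\,2^{\omega}K\int_\calM\|g(y)\|\,d\mathbf{P}(y)$. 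By the definition of $S_\calQ$, this is exactly the statement that $\psi_{s,g}(n):=\int_\calM\|\Phi(t_n s,s,y)g(y)\|\,d\mathbf{P}(y)=f_{s,g}(n+1)$ lies in $\calE$ with $|\psi_{s,g}|_\calE\leq M\,2^{\omega}K\int_\calM\|g(y)\|\,d\mathbf{P}(y)$, so Theorem \ref{datko-1} delivers the polynomial stability in mean.

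The main subtlety lies in the choice of sampling times: with $t_n=n$, the value $f_{s,g}(n)$ cannot be majorized by $f_{s,g}(t)$ for nearby $t$, since the polynomial growth estimate in mean only bounds later times by earlier times, not the reverse. Shifting to $t_n=n+1$ turns this around so that the earlier time $ts$ controls the later integer time $(n+1)s$, while still preserving membership in $\calS(a,b)$; and the required property \eqref{condi-HN} for $\calE=S_\calQ$ is automatically inherited from the continuous condition defining $\calH(\R_{\geq 1})$.
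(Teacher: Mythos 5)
Your proposal is correct and follows essentially the same route as the paper: reduce to the discrete-time Theorem \ref{datko-1} via the associated sequence space $S_\calQ$ of Remark \ref{rem-import-banach-func}, sample at $t_n=n+1$, and use the polynomial bound in mean to dominate the step function $\sum_{n\geq 1}f_{s,g}(n+1)\calX_{[n,n+1)}$ pointwise by $M2^{\omega}f_{s,g}$. Your added checks (that $\{n+1\}\in\calS(1,1)$ and the explanation of why $t_n=n$ would point the growth estimate the wrong way) are details the paper leaves implicit, but they do not change the argument.
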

\begin{proof}
$\bullet$ Necessity. It is immediate by taking $\calQ=L_w^1(\R_{\geq 1})$.

$\bullet$ Sufficiency. Let $S_{\calQ}$ be the Banach sequence space associated to $\calQ$, as mentioned in Remark \ref{rem-import-banach-func}. For every $(s,g)\in\R_{\geq\delta}\times\calL^1(\calM,\mathbf{P})$, let us define the sequence $\psi_{s,g}:\Z_{\geq 1}\to\R_{\geq 0}$ by setting
$$\psi_{s,g}(j)=\int_{\calM}\|\Phi(t_j s,s,y)g(y)\|\,d\mathbf{P}(y),$$
where $t_j=j+1$.

Let $d\geq 1$. Then there exists $k\in\Z_{\geq 1}$ such that $d\in [k,k+1)$, and hence, $[d]=k$. Thus, by \eqref{poly-growth}, we have
\begin{eqnarray*}
\int_{\calM}\|\Phi(t_k s,s,y)g(y)\|\,d\mathbf{P}(y)
&\leq& M\left(\dfrac{k+1}{d}\right)^\omega \int_{\calM}\|\Phi(ds,s,y)g(y)\|\,d\mathbf{P}(y)\\
&\leq& M2^\omega \int_{\calM}\|\Phi(ds,s,y)g(y)\|\,d\mathbf{P}(y),
\end{eqnarray*}
which gives $M 2^{\omega}f_{s,g}\geq \sum\limits_{k\geq 1}\psi_{s,g}(k)\calX_{[k,k+1)}$, and hence Remark \ref{rem-import-banach-func} shows $\psi_{s,g}\in S_{\calQ}$. By Theorem \ref{datko-1}, we conclude that $(\Phi,\zeta)$ is polynomially stable.
\end{proof}

As a consequence, we obtain the following result.
\begin{cor}
Let $(\Phi,\zeta)$ be a stochastic skew-evolution semiflow, which is polynomially bounded in mean (that is \eqref{poly-growth} holds). Then it is polynomially stable in mean if and only if there exist positive constants $K,\delta$ such that the inequality
$$
\int\limits_1^\infty\dfrac{1}{t}\int_{\calM}\|\Phi(ts,s,y)g(y)\|\,d\mathbf{P}(y)\,dt\leq K\int_{\calM}\|g(y)\|\,d\mathbf{P}(y)
$$
holds for every $s\in\R_{\geq\delta}$ and every $g\in\calL^1(\calM,\mathbf{P})$.
\end{cor}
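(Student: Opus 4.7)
The plan is to deduce this corollary directly from Theorem \ref{datko-2} by selecting the specific Banach function space $\calQ = L_w^1(\R_{\geq 1})$, whose norm is by definition
$$|f|_{L_w^1} = \int_1^\infty \frac{|f(t)|}{t}\,dt.$$
The preliminaries already record that $L_w^p(\R_{\geq 1}) \in \calH(\R_{\geq 1})$ for every $p > 0$, so this choice is admissible in Theorem \ref{datko-2}.

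For the sufficiency, I would observe that the hypothesized inequality says exactly that $f_{s,g} \in L_w^1(\R_{\geq 1})$ and
$$|f_{s,g}|_{L_w^1} \leq K\int_{\calM}\|g(y)\|\,d\mathbf{P}(y),\quad\forall s\in\R_{\geq\delta},\forall g\in\calL^1(\calM,\mathbf{P}),$$
where $f_{s,g}$ is the function introduced in Theorem \ref{datko-2}. Hence the hypotheses of that theorem are fulfilled with $\calQ = L_w^1(\R_{\geq 1})$, and polynomial stability in mean follows.

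For the necessity, I would unpack the definition of polynomial stability in mean: there exist $M, \theta > 0$ and $\omega < 0$ such that
$$\int_{\calM}\|\Phi(ts,s,y)g(y)\|\,d\mathbf{P}(y) \leq M\, t^\omega \int_{\calM}\|g(y)\|\,d\mathbf{P}(y),\quad\forall t\geq 1,\forall s\geq\theta.$$
Multiplying by $1/t$ and integrating from $1$ to $\infty$, since $\omega - 1 < -1$ the integral $\int_1^\infty t^{\omega-1}\,dt = -1/\omega$ converges, so
$$\int_1^\infty\frac{1}{t}\int_{\calM}\|\Phi(ts,s,y)g(y)\|\,d\mathbf{P}(y)\,dt \leq -\frac{M}{\omega}\int_{\calM}\|g(y)\|\,d\mathbf{P}(y).$$
This is the required inequality with $\delta = \theta$ and $K = -M/\omega$.

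There is no substantive obstacle here: the content of the corollary is simply that the abstract Banach-function-space criterion of Theorem \ref{datko-2} specializes to a concrete weighted $L^1$ integrability condition, and the only computation involved is the elementary convergence of $\int_1^\infty t^{\omega-1}\,dt$ when $\omega < 0$.
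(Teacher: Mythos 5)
Your proposal is correct and follows exactly the route the paper intends: the corollary is stated as a direct consequence of Theorem \ref{datko-2}, obtained by specializing to $\calQ=L_w^1(\R_{\geq 1})$ (which the paper records as belonging to $\calH(\R_{\geq 1})$), and your explicit verification of the necessity via $\int_1^\infty t^{\omega-1}\,dt=-1/\omega$ for $\omega<0$ is just the computation the paper leaves implicit in its one-line necessity argument.
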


\section{Polynomial instability}
\subsection{Some initial properties}
This section is devoted to proving auxiliary lemmas used later on. The following lemma offers a necessary and sufficient condition for a stochastic skew-evolution semiflow to be polynomially unstable in mean.
\begin{lem}\label{lemma-important}
Let $(\Phi,\zeta)$ be a stochastic skew-evolution semiflow, which is polynomially bounded in mean (that is \eqref{poly-growth} holds). Then it is polynomially unstable in mean if and only if there exist $c>1$, $\lambda\in\Z_{\geq 1}$, and $\delta>0$ such that the inequality
$$
\int_{\calM}\|\Phi(\lambda s,s,y)g(y)\|\,d\mathbf{P}(y)\geq c\int_{\calM}\|g(y)\|\,d\mathbf{P}(y)
$$
holds for every $s\in\R_{\geq\delta}$ and every $g\in\calL^1(\calM,\mathbf{P})$.
\end{lem}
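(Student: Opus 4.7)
The plan is to mirror the proof of the polynomial stability counterpart immediately above, but with the direction of every estimate reversed. Necessity is immediate: given constants $K_0,\delta_0,\alpha_0>0$ witnessing polynomial instability in mean, I pick an integer $\lambda$ large enough that $K_0\lambda^{\alpha_0}>1$, set $c:=K_0\lambda^{\alpha_0}$ and $\delta:=\delta_0$, and specialize the defining inequality to $t=\lambda s$.

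For sufficiency I first iterate the one-step lower bound. The cocycle identity
$$\Phi(\lambda^{k+1}s,s,y)=\Phi(\lambda^{k+1}s,\lambda^k s,\zeta(\lambda^k s,s,y))\,\Phi(\lambda^k s,s,y),$$
combined with the hypothesis applied at the base point $\lambda^k s\in\R_{\geq\delta}$ to the $\calL^1$-function $y\mapsto\Phi(\lambda^k s,s,y)g(y)$, yields by induction on $k$ that
$$\int_\calM\|\Phi(\lambda^k s,s,y)g(y)\|\,d\mathbf{P}(y)\geq c^k\int_\calM\|g(y)\|\,d\mathbf{P}(y)$$
for every $k\in\Z_{\geq 0}$, every $s\in\R_{\geq\delta}$, and every $g\in\calL^1(\calM,\mathbf{P})$. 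No separate passage from integer to real base points is needed here, since the hypothesis is already stated for $s\in\R_{\geq\delta}$.

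Next I bridge the geometric sampling $\{\lambda^k s\}$ to an arbitrary $t\geq s$. Fix $t\geq s\geq\gamma:=\max\{\delta,\theta\}$ and choose $p\in\Z_{\geq 0}$ with $\lambda^p\leq t/s<\lambda^{p+1}$. Applying the cocycle identity $\Phi(\lambda^{p+1}s,s,y)=\Phi(\lambda^{p+1}s,t,\zeta(t,s,y))\,\Phi(t,s,y)$ together with polynomial boundedness in mean on the outer factor, and using $\lambda^{p+1}s/t\in(1,\lambda]$ so that the growth factor is at most $\lambda^\omega$, I obtain
$$\int_\calM\|\Phi(\lambda^{p+1}s,s,y)g(y)\|\,d\mathbf{P}(y)\leq M\lambda^\omega\int_\calM\|\Phi(t,s,y)g(y)\|\,d\mathbf{P}(y).$$
Chaining this with the iterated lower bound for $k=p+1$ and rewriting $c^{p+1}=c\cdot c^p\geq c\lambda^{-\alpha}(t/s)^\alpha$ with $\alpha:=\ln c/\ln\lambda>0$ produces polynomial instability in mean with constant $K:=c/(M\lambda^{\omega+\alpha})$ and exponent $\alpha$ on the regime $s\geq\gamma$.

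The only delicate point is the \emph{composition move} that allows the in-mean polynomial bound to be applied to an integrand carrying an outer factor of the form $\Phi(\cdot,\cdot,\zeta(r,s,y))$ acting on a $y$-dependent function. This tacitly uses that the polynomial growth bound on $\calL^1(\calM,\mathbf{P})$ is preserved under composition with a $\zeta$-pullback, which in the framework of the paper amounts to a consistency property of $\mathbf{P}$ along $\zeta$. It is precisely the step already invoked in the proof of the polynomial stability version of this lemma, and I will use it in the same manner; once granted, everything else is routine bookkeeping of constants.
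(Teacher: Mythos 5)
Your argument is essentially the paper's own proof: iterate the one-step lower bound to obtain the factor $c^k$ along the geometric scale $\lambda^k s$, then compare an arbitrary time to $\lambda^{p+1}s$ via the polynomial bound in mean applied to the outer cocycle factor, extracting the exponent $\alpha=\ln c/\ln\lambda$ and a constant of the form $M^{-1}\lambda^{-\omega}$. The composition/pullback step you flag is invoked in exactly the same tacit way in the paper (both here and in the stability counterpart), and the remaining differences (your $\gamma=\max\{\delta,\theta\}$ versus the paper's $\max\{\delta,\theta+1\}$, and the precise value of $K$) are immaterial.
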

\begin{proof}
The necessity is clear. Let us prove the sufficiency. Take arbitrarily $g\in\calL^1(\calM,\mathbf{P})$. We can prove by induction on $k$ that
$$
\int_{\calM}\|\Phi(\lambda^k s,s,y)g(y)\|\,d\mathbf{P}(y)\geq c^k\int_{\calM}\|g(y)\|\,d\mathbf{P}(y),\quad\forall s\in\R_{\geq\delta},\forall k\in\Z.
$$
Let $t\geq 1$, $s\in\R_{\geq\gamma}$, where $\gamma:=\max\{\delta,\theta+1\}$. Let $p=\max\left\{j\in\Z:t\lambda^{-j}\geq 1 \right\}$. Then we have $\lambda^p\leq t<\lambda^{p+1}$. By \eqref{poly-growth}, we estimate
\begin{eqnarray*}
\int_{\calM}\|\Phi(\lambda^{p+1}s,s,y)g(y)\|\,d\mathbf{P}(y)
&\leq& M\left(\dfrac{\lambda^{p+1}}{t}\right)^\omega\int_{\calM}\|\Phi(ts,s,y)g(y)\|\,d\mathbf{P}(y)\\
&\leq& M\lambda^\omega\int_{\calM}\|\Phi(ts,s,y)g(y)\|\,d\mathbf{P}(y),
\end{eqnarray*}
which gives
\begin{eqnarray*}
\int_{\calM}\|\Phi(ts,s,y)g(y)\|\,d\mathbf{P}(y)
&\geq& M^{-1}\lambda^{-\omega}\int_{\calM}\|\Phi(\lambda^{p+1}s,s,y)g(y)\|\,d\mathbf{P}(y)\\
&\geq& M^{-1}\lambda^{-\omega}c^{p+1}\int_{\calM}\|g(y)\|\,d\mathbf{P}(y)\\
&\geq& Kt^\alpha\int_{\calM}\|g(y)\|\,d\mathbf{P}(y),
\end{eqnarray*}
where $\alpha=\frac{\ln c}{\ln\lambda}$, $K=M^{-1}\lambda^{-\omega}$, and the last inequality uses the fact that $p+1>\frac{\ln t}{\ln\lambda}$.
\end{proof}

\begin{lem}\label{t-bdd}
Let $(\Phi,\zeta)$ be a stochastic skew-evolution semiflow, which is polynomially bounded in mean (that is \eqref{poly-growth} holds). Suppose that there exist constants $\delta,L>0$, an above unbounded sequence $\{t_n\}\in\calS(a,b)$, such that the inequality
$$
\int_{\calM}\|\Phi(t_ns,s,y)g(y)\|\,d\mathbf{P}(y)\geq L\int_{\calM}\|g(y)\|\,d\mathbf{P}(y)
$$
holds for every $n\in\Z_{\geq 1}$, $s\in\R_{\geq\delta}$, and every $g\in\calL^1(\calM,\mathbf{P})$. Then there exists a constant $K>0$ such that the inequality
$$
\int_{\calM}\|\Phi(ts,s,y)g(y)\|\,d\mathbf{P}(y)\geq K\int_{\calM}\|g(y)\|\,d\mathbf{P}(y)
$$
holds for every $t\geq 1$, $s\geq\max\{\delta,\theta+1\}$, and every $g\in\calL^1(\calM,\mathbf{P})$.
\end{lem}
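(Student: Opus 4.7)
The plan is to adapt, step for step, the proof of the stability analogue of this lemma appearing earlier in the paper, flipping every inequality so as to produce a uniform lower bound in place of a uniform upper bound. A convenient feature here is that the hypothesis is formulated for every real $s \in \R_{\geq\delta}$ rather than only for integer $s$, so the discrete-to-continuous bridging step that was needed in the stability version can be dispensed with and the argument becomes somewhat shorter.

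Fix $t \geq 1$, $s \geq \gamma := \max\{\delta, \theta+1\}$, and $g \in \calL^1(\calM, \mathbf{P})$. The key step is to sandwich $t$ between two well-controlled terms of $\{t_n\}$. Because $\{t_n\}$ is unbounded above, I can choose $p \in \Z_{\geq 1}$ with $t_{2d} \geq t$ for every $d \geq p$, and then set $j := \min\{k \in \{1,\dots,p\} : t \leq t_{2k}\}$. Two cases arise. If $j = 1$, then $1 \leq t \leq t_2$, and \eqref{poly-growth} applied between $ts$ and $t_2 s$ gives
\[
\int_{\calM} \|\Phi(t_2 s, s, y) g(y)\|\, d\mathbf{P}(y) \leq M\left(\dfrac{t_2}{t}\right)^\omega \int_{\calM} \|\Phi(ts, s, y) g(y)\|\, d\mathbf{P}(y);
\]
combining with the standing hypothesis at $n=2$ and the crude bound $(t_2/t)^\omega \leq t_2^\omega$ yields the lower estimate $\int_{\calM}\|\Phi(ts,s,y)g(y)\|\,d\mathbf{P}(y) \geq L M^{-1} t_2^{-\omega}\int_{\calM}\|g(y)\|\,d\mathbf{P}(y)$. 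If $j \geq 2$, minimality of $j$ forces $t_{2(j-1)} < t$, and since $j \leq 2(j-1)$ we obtain $t_j \leq t_{2(j-1)} < t \leq t_{2j}$; then \eqref{poly-growth} applied between $ts$ and $t_{2j}s$, combined with the control $t_{2j}/t \leq t_{2j}/t_j \leq a 2^b$ coming from \eqref{tab} and with the hypothesis at $n = 2j$, produces $\int_{\calM}\|\Phi(ts,s,y)g(y)\|\,d\mathbf{P}(y) \geq L M^{-1}(a 2^b)^{-\omega}\int_{\calM}\|g(y)\|\,d\mathbf{P}(y)$.

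Taking $K := L M^{-1} \min\bigl\{t_2^{-\omega}, (a 2^b)^{-\omega}\bigr\}$ finishes both cases and delivers the claimed uniform lower bound. I do not foresee any real obstacle: the manipulations of $\Phi$ along $\zeta$-shifted arguments are identical to those already used throughout the paper, the hypothesis $s \geq \theta+1$ is exactly what is needed to legitimately invoke \eqref{poly-growth}, and condition \eqref{tab} on the class $\calS(a,b)$ is tailor-made to provide the multiplicative control $t_{2j}/t_j \leq a 2^b$ that is the crucial input in the $j \geq 2$ case.
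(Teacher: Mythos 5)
Your proposal is correct and follows essentially the same route as the paper's own proof: the same choice of $p$ and $j=\min\{k\in\{1,\dots,p\}:t\leq t_{2k}\}$, the same two cases with the bounds $M^{-1}t_2^{-\omega}L$ and $M^{-1}a^{-\omega}2^{-b\omega}L$, and the same final constant. Your observation that no discrete-to-continuous bridging is needed (because the hypothesis here is stated for all real $s\geq\delta$) is also exactly how the paper's proof proceeds.
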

\begin{proof}
Let $t\geq 1$, $g\in\calL^1(\calM,\mathbf{P})$, and $s\in\R_{\geq\gamma}$, where $\gamma:=\max\{\delta,\theta+1\}$. Since $\lim\limits_{n\to\infty}t_n=\infty$, we can find $p\in\Z$ satisfying
$$
t_{2d}\geq t,\quad\forall d\geq p.
$$
Setting $j=\min\{k\in\{1,\cdots,p\}:t\leq t_{2k}\}$. There are two possibilities of $j$.

\noindent - If $j=1$, then $t\leq t_2$, and so by \eqref{poly-growth}
$$
\int_{\calM}\|\Phi(t_2s,s,y)g(y)\|\,d\mathbf{P}(y)\leq Mt_2^\omega\int_{\calM}\|\Phi(ts,s,y)g(y)\|\,d\mathbf{P}(y),
$$
which gives, by the assumption, that
\begin{eqnarray*}
\int_{\calM}\|\Phi(ts,s,y)g(y)\|\,d\mathbf{P}(y)
&\geq& M^{-1}t_2^{-\omega}\int_{\calM}\|\Phi(t_2s,s,y)g(y)\|\,d\mathbf{P}(y)\\
&\geq& M^{-1}t_2^{-\omega}L\int_{\calM}\|g(y)\|\,d\mathbf{P}(y).
\end{eqnarray*}

\noindent - If $j\geq 2$, then $t\leq t_{2j}$ and $t>t_{2(j-1)}\geq t_j$. By \eqref{poly-growth}, we have
\begin{eqnarray*}
\int_{\calM}\|\Phi(t_{2j}s,s,y)g(y)\|\,d\mathbf{P}(y)
&\leq& M\left(\dfrac{t_{2j}}{t}\right)^\omega\int_{\calM}\|\Phi(ts,s,y)g(y)\|\,d\mathbf{P}(y)\\
&\leq& M\left(\dfrac{t_{2j}}{t_j}\right)^\omega\int_{\calM}\|\Phi(ts,s,y)g(y)\|\,d\mathbf{P}(y)\\
&\leq& Ma^\omega 2^{b\omega}\int_{\calM}\|\Phi(ts,s,y)g(y)\|\,d\mathbf{P}(y),
\end{eqnarray*}
and so
\begin{eqnarray*}
\int_{\calM}\|\Phi(ts,s,y)g(y)\|\,d\mathbf{P}(y)
&\geq& M^{-1}a^{-\omega} 2^{-b\omega}\int_{\calM}\|\Phi(t_{2j}s,s,y)g(y)\|\,d\mathbf{P}(y)\\
&\geq& M^{-1}a^{-\omega} 2^{-b\omega}L\int_{\calM}\|g(y)\|\,d\mathbf{P}(y).
\end{eqnarray*}
Thus, we can choose $K\leq \min\{M^{-1}a^{-\omega} 2^{-b\omega}L,M^{-1}t_2^{-\omega}L\}$.
\end{proof}

\subsection{Discrete-time version}
With all preparation in place, we now are ready to state and prove the main result, which can be regarded as a discrete variant of the Datko-type theorem.
\begin{thm}\label{datko-1}
Let $(\Phi,\zeta)$ be a stochastic skew-evolution semiflow, which is polynomially bounded in mean (that is \eqref{poly-growth} holds). Then $(\Phi,\zeta)$ is polynomially unstable in mean if and only if it is injective in the stochastic sense (that is \eqref{injective-defn} holds) and there exist $\delta>0$, a Banach sequence space $\calE\in\calH(\Z_{\geq 1})$, and $\{t_n\}\in\calS(a,b)$ such that
\begin{enumerate}
\item for every $(s,g)\in\R_{\geq\delta}\times (\calL^1(\calM,\mathbf{P})\setminus\{0\})$, the sequence
$$
\psi_{s,g}:\Z_{\geq 1}\to\R_{\geq 0},\quad\psi_{s,g}(j)=\left[\int_{\calM}\|\Phi(t_j s,s,y)g(y)\|\,d\mathbf{P}(y)\right]^{-1}
$$
belongs to $\calE$.
\item there exists $K>0$ such that
$$|\psi_{s,g}(\cdot)|_{\calE}\leq K\left[\int_{\calM}\|g(y)\|\,d\mathbf{P}(y)\right]^{-1},\quad\forall s\in\R_{\geq\delta},\forall g\in\calL^1(\calM,\mathbf{P})\setminus\{0\}.$$
\end{enumerate}
\end{thm}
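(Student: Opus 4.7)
The argument mirrors the sufficiency proof of the stability version (Theorem~\ref{datko-1}), with lower and upper bounds swapped and $\psi$ now the reciprocal of the mean evolution. For necessity, take $\calE = l^1_w(\Z_{\geq 1})$ and $t_n = n$ (so \eqref{tab} holds with $a=b=1$): polynomial instability gives $\int_\calM \|\Phi(ns,s,y)g(y)\|\,d\mathbf{P}(y) \geq Kn^\alpha \int_\calM\|g\|\,d\mathbf{P}$, whence $\psi_{s,g}(n) \leq K^{-1}n^{-\alpha}[\int_\calM\|g\|\,d\mathbf{P}]^{-1}$ and $|\psi_{s,g}|_{l^1_w} \leq K^{-1}(\sum_{n\geq 1}n^{-\alpha-1})[\int_\calM\|g\|\,d\mathbf{P}]^{-1}$. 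Injectivity of the mean evolution is forced by the same lower bound.

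For sufficiency set $\gamma = \max\{\delta,\theta+1\}$, and split on the boundedness of $\{t_n\}$. Assume first $p := \sup_n t_n < \infty$. Polynomial boundedness gives $\int_\calM\|\Phi(t_j s,s,y)g\|\,d\mathbf{P} \leq Mp^\omega \int_\calM\|g\|\,d\mathbf{P}$, so $\psi_{s,g}(j) \geq M^{-1}p^{-\omega}[\int_\calM\|g\|\,d\mathbf{P}]^{-1}$ uniformly in $j \in \{1,\dots,k\}$. The hypothesis then yields $KMp^\omega \geq |\calX_{\{1,\dots,k\}}|_\calE$ for every $k$, contradicting \eqref{condi-HN}. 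Hence $\{t_n\}$ is necessarily unbounded.

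In the unbounded case I pick $p$ with $|\calX_{\{m,\dots,mp\}}|_\calE \geq 1$ for every $m \geq 1$. For $j \in \{k,\dots,kp\}$, polynomial boundedness together with the cocycle identity (applied as in the stability proof) gives $\int_\calM\|\Phi(t_j s,s,y)g\|\,d\mathbf{P} \leq Ma^\omega p^{\omega b}\int_\calM\|\Phi(t_k s,s,y)g\|\,d\mathbf{P}$, using $t_j/t_k \leq ap^b$ from the $\calS(a,b)$-property. Equivalently, $\psi_{s,g}(j) \geq M^{-1}a^{-\omega}p^{-\omega b}\psi_{s,g}(k)\calX_{\{k,\dots,kp\}}(j)$; taking $\calE$-norms and using the hypothesis gives
\[
\int_\calM \|\Phi(t_k s,s,y)g(y)\|\,d\mathbf{P}(y) \geq L\int_\calM\|g(y)\|\,d\mathbf{P}(y)
\]
for a constant $L > 0$ independent of $k \geq 1$, $s \geq \gamma$, $g \in \calL^1(\calM,\mathbf{P})\setminus\{0\}$. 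Lemma~\ref{t-bdd} then upgrades this to the uniform lower bound $\int_\calM\|\Phi(ts,s,y)g\|\,d\mathbf{P} \geq K_0 \int_\calM\|g\|\,d\mathbf{P}$ for all $t \geq 1$ and $s \geq \gamma$.

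The remaining step amplifies $K_0 > 0$ into a strict $c > 1$ so that Lemma~\ref{lemma-important} applies. Reapplying the just-obtained bound to the shifted data $\Phi(t_j s,s,\cdot)g$ (with the same cocycle manipulation as in the stability proof) yields $\psi_{s,g}(\ell) \leq K_0^{-1}\psi_{s,g}(j)$ whenever $\ell \geq j$, hence $\psi_{s,g} \geq K_0 \psi_{s,g}(\ell)\calX_{\{1,\dots,\ell\}}$; the hypothesis then forces $\psi_{s,g}(\ell) \leq KK_0^{-1}|\calX_{\{1,\dots,\ell\}}|_\calE^{-1}[\int_\calM\|g\|\,d\mathbf{P}]^{-1}$. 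Choosing $\ell$ so large, via \eqref{condi-HN}, that $|\calX_{\{1,\dots,\ell\}}|_\calE \geq 2KK_0^{-1}M2^\omega$, we obtain $\int_\calM\|\Phi(t_\ell s,s,y)g\|\,d\mathbf{P} \geq 2M2^\omega \int_\calM\|g\|\,d\mathbf{P}$; comparing with the integer $\lambda = [t_\ell] \geq 1$ via polynomial boundedness gives $\int_\calM\|\Phi(\lambda s,s,y)g\|\,d\mathbf{P} \geq 2\int_\calM\|g\|\,d\mathbf{P}$, and Lemma~\ref{lemma-important} concludes polynomial instability. The principal technical obstacle, shared with the stability proof, is justifying the mean-value cocycle manipulations relating $\int\|\Phi(t,s,y)g\|\,d\mathbf{P}$ and $\int\|\Phi(t,r,\cdot)\Phi(r,s,y)g\|\,d\mathbf{P}$ in the presence of the $\zeta$-shift built into the cocycle identity.
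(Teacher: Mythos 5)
Your proposal is correct and follows essentially the same route as the paper's proof: the same choice $\calE=l^1_w(\Z_{\geq 1})$, $t_n=n$ for necessity, the same contradiction argument ruling out bounded $\{t_n\}$ via \eqref{condi-HN}, the same use of the block $\{m,\dots,mp\}$ and the $\calS(a,b)$ bound to get a uniform lower bound at the times $t_k$, then Lemma \ref{t-bdd} and the final amplification to $c=2$ at $\lambda=[t_\ell]$ feeding into Lemma \ref{lemma-important}. The measurability/cocycle issue you flag at the end is likewise glossed over in the paper, so it is not a gap relative to the source.
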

\begin{proof}
$\bullet$ Necessity. It is immediate by taking $\calE=l^1_w(\Z_{\geq 1})$ and $t_n=n$.

$\bullet$ Sufficiency. Let $\gamma :=\max\{\delta,\theta+1\}$, $g\in\calL^1(\calM,\mathbf{P})$, and $s\in\R_{\geq\gamma}$. 

We prove by a contradiction that $\{t_n\}$ is above unbounded. Indeed, we assume that $\mathbb{T} :=\sup\limits\{t_n:n\in\Z_{\geq 1}\}<\infty$. For every $j\in\{1,\cdots,k\}$, by \eqref{poly-growth} we have
\begin{eqnarray*}
\int_{\calM}\|\Phi(t_js,s,y)g(y)\|\,d\mathbf{P}(y)
&\leq& M\mathbb{T}^\omega\int_{\calM}\|g(y)\|\,d\mathbf{P}(y)
\end{eqnarray*}
which gives
\begin{eqnarray*}
\psi_{s,g}(\cdot)\geq M^{-1}\mathbb{T}^{-\omega}\left[\int_{\calM}\|g(y)\|\,d\mathbf{P}(y)\right]^{-1}\calX_{\{1,\cdots,k\}},
\end{eqnarray*}
and hence, we can estimate
\begin{eqnarray*}
&& K\left[\int_{\calM}\|g(y)\|\,d\mathbf{P}(y)\right]^{-1}\geq |\psi_{s,g}(\cdot)|_{\calE}\\
&&\geq M^{-1}\mathbb{T}^{-\omega}\left[\int_{\calM}\|g(y)\|\,d\mathbf{P}(y)\right]^{-1}\cdot|\calX_{\{1,\cdots,k\}}|_{\calE}.
\end{eqnarray*}
By \eqref{condi-HN}, we let $k\to\infty$ in the last inequality to get the contradiction.

Also by condition \eqref{condi-HN}, we can find $p$ with
$$
|\calX_{\{m,\cdots,mp\}}|_{\calE}\geq 1,\quad\forall m\in\Z_{\geq 1}.
$$
Let $m\in\Z_{\geq 1}$. For every $j\in\{m,\cdots,mp\}$, by \eqref{poly-growth} we have
\begin{eqnarray*}
\int_{\calM}\|\Phi(t_js,s,y)g(y)\|\,d\mathbf{P}(y)
&\leq& M\left(\dfrac{t_j}{t_m}\right)^\omega\int_{\calM}\|\Phi(t_ms,s,y)g(y)\|\,d\mathbf{P}(y)\\
&\leq& M\left(\dfrac{t_{mp}}{t_m}\right)^\omega\int_{\calM}\|\Phi(t_ms,s,y)g(y)\|\,d\mathbf{P}(y)\\
&\leq& Ma^\omega p^{b\omega}\int_{\calM}\|\Phi(t_ms,s,y)g(y)\|\,d\mathbf{P}(y),
\end{eqnarray*}
which gives
$$
\psi_{s,g}(\cdot)\geq M^{-1}a^{-\omega} p^{-b\omega}\left[\int_{\calM}\|\Phi(t_ms,s,y)g(y)\|\,d\mathbf{P}(y)\right]^{-1}\calX_{\{m,\cdots,mp\}}.
$$
Hence,
\begin{eqnarray*}
&& K\left[\int_{\calM}\|g(y)\|\,d\mathbf{P}(y)\right]^{-1}\geq |\psi_{s,g}(\cdot)|_{\calE}\\
&&\geq M^{-1}a^{-\omega} p^{-b\omega}\left[\int_{\calM}\|\Phi(t_ms,s,y)g(y)\|\,d\mathbf{P}(y)\right]^{-1}|\calX_{\{m,\cdots,mp\}}|_{\calE}\\
&&\geq M^{-1}a^{-\omega} p^{-b\omega}\left[\int_{\calM}\|\Phi(t_ms,s,y)g(y)\|\,d\mathbf{P}(y)\right]^{-1}.
\end{eqnarray*}
Now we can use Lemma \ref{t-bdd} to see that there exists a constant $L>0$ such that
$$
\int_{\calM}\|\Phi(ts,s,y)g(y)\|\,d\mathbf{P}(y)\geq L\int_{\calM}\|g(y)\|\,d\mathbf{P}(y),\quad\forall t\geq 1,\forall s\geq\gamma.
$$

Let $\ell\in\Z_{\geq 1}$. For every $j\in\{1,\cdots,\ell\}$, we can estimate
$$
\int_{\calM}\|\Phi(t_{\ell}s,s,y)g(y)\|\,d\mathbf{P}(y)\geq L\int_{\calM}\|\Phi(t_j s,s,y)g(y)\|\,d\mathbf{P}(y),
$$
which is equivalent to the fact that
$$
\psi_{s,g}(\cdot)\geq L\left[\int_{\calM}\|\Phi(t_{\ell}s,s,y)g(y)\|\,d\mathbf{P}(y)\right]^{-1}\calX_{\{1,\cdots,\ell\}}.
$$
By the assumption, we estimate
\begin{eqnarray*}
&& K\left[\int_{\calM}\|g(y)\|\,d\mathbf{P}(y)\right]^{-1}\geq |\psi_{s,g}(\cdot)|_{\calE}\\
&&\geq L\left[\int_{\calM}\|\Phi(t_{\ell}s,s,y)g(y)\|\,d\mathbf{P}(y)\right]^{-1}\cdot |\calX_{\{1,\cdots,\ell\}}|_{\calE}.
\end{eqnarray*}
On the other hand, by \eqref{poly-growth},
\begin{eqnarray*}
&&\int_{\calM}\|\Phi(t_\ell s,s,y)g(y)\|\,d\mathbf{P}(y)\\
&&\leq M\left(\dfrac{t_\ell}{[t_\ell]}\right)^\omega\int_{\calM}\|\Phi([t_{\ell}]s,s,y)g(y)\|\,d\mathbf{P}(y)\\
&&\leq M2^\omega\int_{\calM}\|\Phi([t_{\ell}]s,s,y)g(y)\|\,d\mathbf{P}(y).
\end{eqnarray*}
Thus, we have
$$
\left[\int_{\calM}\|\Phi([t_\ell]s,s,y)g(y)\|\,d\mathbf{P}(y)\right]^{-1}\cdot |\calX_{\{1,\cdots,\ell\}}|_{\calE}\leq KL^{-1}M2^\omega\left[\int_{\calM}\|g(y)\|\,d\mathbf{P}(y)\right]^{-1}.
$$
By \eqref{condi-HN} we can choose $\ell$ with $|\calX_{\{1,\cdots,\ell\}}|_{\calE} \geq 2KL^{-1}M2^\omega$, and so by Lemma \ref{lemma-important} we get the desired result.
\end{proof}

The following result is a direct consequence of the theorem above.
\begin{cor}
Let $(\Phi,\zeta)$ be a stochastic skew-evolution semiflow, which is polynomially bounded in mean (that is \eqref{poly-growth} holds). Then $(\Phi,\zeta)$ is polynomially unstable in mean if and only if it is injective in the stochastic sense (that is \eqref{injective-defn} holds), and there exist positive constants $K,\delta$ such that the inequality
$$
\sum_{j=1}^\infty\dfrac{1}{j}\left[\int_{\calM}\|\Phi(js,s,y)g(y)\|\,d\mathbf{P}(y)\right]^{-1}\leq K\left[\int_{\calM}\|g(y)\|\,d\mathbf{P}(y)\right]^{-1}
$$
holds for every $s\in\R_{\geq\delta}$ and every $g\in\calL^1(\calM,\mathbf{P})\setminus\{0\}$.
\end{cor}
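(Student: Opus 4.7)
The plan is to deduce the corollary as a direct specialization of the preceding discrete-time Datko-type theorem (Theorem \ref{datko-1} for instability) with the concrete choice $\calE = l^1_w(\Z_{\geq 1})$ and $t_n = n$. For this choice, the norm is
$$|\psi_{s,g}(\cdot)|_{\calE} = \sum_{j=1}^\infty \frac{1}{j}\left[\int_{\calM}\|\Phi(js,s,y)g(y)\|\,d\mathbf{P}(y)\right]^{-1},$$
so condition (2) of Theorem \ref{datko-1} becomes \emph{verbatim} the summability inequality in the corollary.

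First I would verify the two admissibility conditions on the chosen data. The fact that $l^1_w(\Z_{\geq 1})$ lies in the class $\calH(\Z_{\geq 1})$ is recorded in the preliminaries (just after the definition of $\calH(\Z_{\geq 1})$). For the sequence $t_n = n$, one has $t_{mn}/t_n = m$, so \eqref{tab} is satisfied with $a = b = 1$, i.e.\ $\{n\} \in \calS(1,1)$.

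For the necessity direction, suppose $(\Phi,\zeta)$ is polynomially unstable in mean, with constants $K_0, \delta_0, \alpha$. Then injectivity in the stochastic sense is immediate because the right-hand side of the defining inequality is strictly positive whenever $g \ne 0$. Setting $t = js$, the defining inequality yields
$$\left[\int_{\calM}\|\Phi(js,s,y)g(y)\|\,d\mathbf{P}(y)\right]^{-1} \leq K_0^{-1}\, j^{-\alpha}\left[\int_{\calM}\|g(y)\|\,d\mathbf{P}(y)\right]^{-1}$$
for all $s \geq \delta_0$ and all nonzero $g \in \calL^1(\calM,\mathbf{P})$. Multiplying by $1/j$ and summing gives a convergent $p$-series, since $\sum_{j \geq 1} j^{-1-\alpha} < \infty$, and this produces the claimed bound with $\delta = \delta_0$ and $K = K_0^{-1}\sum_{j \geq 1} j^{-1-\alpha}$.

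For the sufficiency direction, the injectivity hypothesis guarantees that the sequence $\psi_{s,g}(j) = [\int_{\calM}\|\Phi(js,s,y)g(y)\|\,d\mathbf{P}(y)]^{-1}$ is well-defined and finite for every $s \in \R_{\geq\delta}$ and every $g \in \calL^1(\calM,\mathbf{P})\setminus\{0\}$. The assumed inequality states precisely that $\psi_{s,g}$ lies in $l^1_w(\Z_{\geq 1})$ with $|\psi_{s,g}|_{\calE} \leq K\left[\int_{\calM}\|g(y)\|\,d\mathbf{P}(y)\right]^{-1}$. So conditions (1) and (2) of Theorem \ref{datko-1} are fulfilled, and the theorem delivers polynomial instability in mean.

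There is essentially no obstacle beyond bookkeeping; the only mild point to keep in mind is that the theorem needs the injectivity hypothesis to even make sense of the reciprocals appearing in $\psi_{s,g}$, so it must be listed among the equivalent conditions in the corollary (as it is), and both directions must handle it explicitly as above.
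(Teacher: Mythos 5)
Your proposal is correct and matches the paper's intent exactly: the paper presents this corollary as a direct consequence of the discrete-time instability theorem, obtained by specializing to $\calE = l^1_w(\Z_{\geq 1})$ and $t_n = n$, which is precisely what you do (including the routine checks that $l^1_w(\Z_{\geq 1})\in\calH(\Z_{\geq 1})$ and $\{n\}\in\calS(1,1)$, and the $\sum_j j^{-1-\alpha}<\infty$ estimate for necessity). No gaps.
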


\subsection{Continuous-time version}
In this subsection, we give a continuous-time version of the Datko-type theorem by making use of Theorem \ref{datko-1}.
\begin{thm}\label{datko-2}
Let $(\Phi,\zeta)$ be a stochastic skew-evolution semiflow, which is polynomially bounded in mean (that is \eqref{poly-growth} holds). Then $(\Phi,\zeta)$ is polynomially unstable in mean if and only if it is injective in the stochastic sense (that is \eqref{injective-defn} holds), and there exist $\delta>0$, $\calQ\in\calH(\R_{\geq 1})$ such that
\begin{enumerate}
\item for every $(s,g)\in\R_{\geq\delta}\times (\calL^1(\calM,\mathbf{P})\setminus\{0\})$, the function $f_{s,g}:\R_{\geq 1}\to\R_{\geq 0}$
\begin{equation*}
f_{s,g}(t)=\left[\int_{\calM}\|\Phi(ts,s,y)g(y)\|\,d\mathbf{P}(y)\right]^{-1}
\end{equation*}
belongs to $\calQ$;
\item there exists $K>0$ such that
$$|f_{s,g}(\cdot)|_{\calQ}\leq K\left[\int_{\calM}\|g(y)\|\,d\mathbf{P}(y)\right]^{-1},\quad\forall s\in\R_{\geq\delta},\forall g\in\calL^1(\calM,\mathbf{P})\setminus\{0\}.$$
\end{enumerate}
\end{thm}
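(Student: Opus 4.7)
The plan is to reduce the continuous-time statement to the discrete-time instability result (Theorem \ref{datko-1}) by the same device used for the stability case, the main subtlety being that $f_{s,g}$ now involves a \emph{reciprocal}, so the direction of all comparison inequalities flips relative to the proof of Theorem~\ref{datko-2} in the stability setting.

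For \emph{necessity}, I would simply take $\calQ:=L^1_w(\R_{\geq 1})\in\calH(\R_{\geq 1})$. Polynomial instability in mean provides $K_0,\alpha,\delta>0$ with
$\int_{\calM}\|\Phi(ts,s,y)g(y)\|\,d\mathbf{P}(y)\geq K_0\,t^\alpha\int_{\calM}\|g(y)\|\,d\mathbf{P}(y)$
for all $t\geq 1$, $s\geq\delta$. This already forces injectivity in the stochastic mean, and inverting gives $f_{s,g}(t)\leq (K_0\,t^\alpha\int_{\calM}\|g\|\,d\mathbf{P})^{-1}$. A direct evaluation of $\int_1^\infty f_{s,g}(t)/t\,dt$ then delivers $|f_{s,g}|_{L^1_w}\leq (K_0\alpha)^{-1}\bigl[\int_{\calM}\|g\|\,d\mathbf{P}\bigr]^{-1}$, which is the required bound.

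For \emph{sufficiency}, I would set $\gamma:=\max\{\delta,\theta+1\}$ and $t_j:=j$; since $t_{mn}/t_n=m$, the sequence lies in $\calS(1,1)$. For $(s,g)\in\R_{\geq\gamma}\times(\calL^1(\calM,\mathbf{P})\setminus\{0\})$ define
$\psi_{s,g}(j):=[\int_{\calM}\|\Phi(t_j s,s,y)g(y)\|\,d\mathbf{P}(y)]^{-1}$.
The key step is the pointwise domination $M2^\omega f_{s,g}\geq\sum_{j\geq 1}\psi_{s,g}(j)\calX_{[j,j+1)}$. To establish it, for any $d\in[j,j+1)$ with $j\geq 1$, polynomial growth \eqref{poly-growth} applied between $t_j s=js$ and $ds$ (valid since $js\geq\gamma\geq\theta$) gives
\begin{equation*}
\int_{\calM}\|\Phi(ds,s,y)g(y)\|\,d\mathbf{P}(y)\leq M\left(\dfrac{d}{j}\right)^\omega\int_{\calM}\|\Phi(t_j s,s,y)g(y)\|\,d\mathbf{P}(y)\leq M2^\omega\int_{\calM}\|\Phi(t_j s,s,y)g(y)\|\,d\mathbf{P}(y),
\end{equation*}
and reciprocating yields $M2^\omega f_{s,g}(d)\geq\psi_{s,g}(j)$ for every such $d$, hence the asserted domination on $\R_{\geq 1}$.

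By the lattice property of the Banach function norm on $\calQ$ together with Remark~\ref{rem-import-banach-func}, the pointwise bound above forces $\psi_{s,g}\in S_\calQ\in\calH(\Z_{\geq 1})$ with $|\psi_{s,g}|_{S_\calQ}\leq M2^\omega|f_{s,g}|_{\calQ}\leq M2^\omega K\,[\int_{\calM}\|g\|\,d\mathbf{P}]^{-1}$. Combined with the hypothesised injectivity and with the sequence $\{t_j\}\in\calS(1,1)$, this verifies every hypothesis of Theorem~\ref{datko-1}, so $(\Phi,\zeta)$ is polynomially unstable in mean. There is no real obstacle here beyond carefully tracking which quantity dominates which after reciprocation: in the stability case the step function built from $\psi_{s,g}$ must dominate $f_{s,g}$, whereas here it must be \emph{dominated by} $f_{s,g}$ in order for membership in $\calQ$ and the associated norm bound to transfer to $\psi_{s,g}\in S_\calQ$ via Remark~\ref{rem-import-banach-func}.
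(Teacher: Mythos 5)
Your proposal is correct and follows essentially the same route as the paper: reduce to the discrete instability theorem via $t_j=j$, establish the pointwise domination $M2^{\omega}f_{s,g}\geq\sum_{j\geq 1}\psi_{s,g}(j)\calX_{[j,j+1)}$ from \eqref{poly-growth}, and transfer membership and the norm bound to $S_{\calQ}$ through Remark \ref{rem-import-banach-func}. The only differences are cosmetic (you spell out the necessity computation and the $\calS(1,1)$ verification, which the paper leaves implicit).
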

\begin{proof}
$\bullet$ Necessity. It is immediate by taking $\calQ=L_w^1(\R_{\geq 1})$.

$\bullet$ Sufficiency. For every $(s,g)\in\R_{\geq\delta}\times\calL^1(\calM,\mathbf{P})$, let us define the sequence $\psi_{s,g}:\R_{\geq 1}\to\R_{\geq 0}$ by setting
$$\psi_{s,g}(j)=\left[\int_{\calM}\|\Phi(t_j s,s,y)g(y)\|\,d\mathbf{P}(y)\right]^{-1},$$
where $t_j=j$.

Let $t\geq 1$. Then there exists $k\in\Z_{\geq 1}$ such that $t\in [k,k+1)$, and hence, $[t]=k$. Thus, by \eqref{poly-growth}, we have
\begin{eqnarray*}
\int_{\calM}\|\Phi(ts,s,y)g(y)\|\,d\mathbf{P}(y)
&\leq& M\left(\dfrac{t}{k}\right)^\omega\int_{\calM}\|\Phi(t_ks,s,y)g(y)\|\,d\mathbf{P}(y)\\
&\leq& M2^\omega\int_{\calM}\|\Phi(t_ks,s,y)g(y)\|\,d\mathbf{P}(y)
\end{eqnarray*}
which gives $M 2^\omega f_{s,g}\geq \sum\limits_{k\geq 1}\psi_{s,g}(k)\calX_{[k,k+1)}$, and so
\begin{eqnarray}\label{ineq-im}
M 2^\omega |f_{s,g}|_{\calQ}\geq |\sum\limits_{k\geq 1}\psi_{s,g}(k)\calX_{[k,k+1)}|_{\calQ}.
\end{eqnarray}
Hence by Remark \ref{rem-import-banach-func}, $S_{\calQ}\in\calH(\Z_{\geq 1})$. By \eqref{ineq-im} and Theorem \ref{datko-1}, we conclude that $(\Phi,\zeta)$ is polynomially unstable in mean.
\end{proof}

As a consequence, we obtain the following result.
\begin{cor}
Let $(\Phi,\zeta)$ be a stochastic skew-evolution semiflow, which is polynomially bounded in mean (that is \eqref{poly-growth} holds). Then $(\Phi,\zeta)$ is polynomially stable in mean if and only if it is injective in the stochastic sense (that is \eqref{injective-defn} holds), and there exist positive constants $K,\delta$ such that the inequality
$$
\int\limits_1^\infty\dfrac{1}{t}\left[\int_{\calM}\|\Phi(ts,s,y)g(y)\|\,d\mathbf{P}(y)\right]^{-1}\,dt\leq K\left[\int_{\calM}\|g(y)\|\,d\mathbf{P}(y)\right]^{-1}
$$
holds for every $s\in\R_{\geq\delta}$ and every $g\in\calL^1(\calM,\mathbf{P})\setminus\{0\}$.
\end{cor}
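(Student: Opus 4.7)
My plan is to obtain the final corollary as a direct specialization of the immediately preceding Theorem \ref{datko-2} by choosing the Banach function space $\calQ = L^1_w(\R_{\geq 1})$. Modulo two routine verifications, the corollary then drops out without further machinery.

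First I would check that $L^1_w(\R_{\geq 1})$ belongs to the admissible class $\calH(\R_{\geq 1})$ introduced in Section \ref{Banach}. For $s \geq 1$ and $t > 1$, a direct computation gives
\begin{equation*}
|\calX_{[s,ts)}|_{L^1_w} = \int_s^{ts}\frac{du}{u} = \ln t,
\end{equation*}
which is independent of $s \geq 1$ and diverges as $t \to \infty$. Hence $\inf_{s\geq 1}|\calX_{[s,ts)}|_{L^1_w} = \ln t \to \infty$, so indeed $L^1_w(\R_{\geq 1}) \in \calH(\R_{\geq 1})$.

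Next I would observe that, by the very definition of the norm on $L^1_w(\R_{\geq 1})$, the function
\begin{equation*}
f_{s,g}(t) = \left[\int_\calM\|\Phi(ts,s,y)g(y)\|\,d\mathbf{P}(y)\right]^{-1}
\end{equation*}
lies in $L^1_w(\R_{\geq 1})$ with $|f_{s,g}|_{L^1_w} \leq K\bigl[\int_\calM\|g(y)\|\,d\mathbf{P}(y)\bigr]^{-1}$ if and only if the integral bound displayed in the corollary holds. In other words, the hypothesis of the corollary is nothing but a verbatim reformulation of conditions (1)--(2) of Theorem \ref{datko-2} specialized to $\calQ = L^1_w(\R_{\geq 1})$, together with stochastic injectivity, so the sufficiency direction follows at once from the theorem. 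For necessity, the growth constants $K_0,\alpha,\delta$ supplied by the asymptotic regime yield the pointwise estimate $f_{s,g}(t) \leq K_0^{-1} t^{-\alpha}\bigl[\int_\calM\|g(y)\|\,d\mathbf{P}(y)\bigr]^{-1}$ for $t \geq 1$ and $s \geq \delta$; integrating against $dt/t$ over $[1,\infty)$ produces the desired inequality with explicit constant $K = 1/(K_0\alpha)$, while stochastic injectivity is immediate from the strict positive lower bound. There is essentially no obstacle here: the entire argument is a one-step instantiation of the previously established theorem, the only nontrivial ingredient being the $\calH(\R_{\geq 1})$-membership verified above.
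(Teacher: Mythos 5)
Your proposal is correct and is essentially identical to the paper's (implicit) proof: the corollary is stated there as a direct consequence of the immediately preceding continuous-time theorem, obtained by instantiating $\calQ=L^1_w(\R_{\geq 1})$ (whose membership in $\calH(\R_{\geq 1})$, recorded in the preliminaries, your computation $|\calX_{[s,ts)}|_{L^1_w}=\ln t$ verifies) and observing that the displayed weighted-integral bound is verbatim the norm condition (1)--(2), with necessity obtained by integrating the instability lower bound exactly as you do. The only point worth flagging is that the corollary's phrase ``polynomially stable in mean'' is evidently a misprint for ``polynomially unstable in mean''---the condition involves reciprocals and the section concerns instability---and your argument, which invokes the instability theorem and its lower estimate, correctly proves the unstable version.
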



\bibliographystyle{plain}
\bibliography{refs}
\end{document}